\newtheorem{prop}{Proposition}[section]
\newtheorem{cor}[prop]{Corollary}
\newtheorem{lem}[prop]{Lemma}
\newtheorem{thm}[prop]{Theorem}
\theoremstyle{definition}
\newtheorem{rem}[prop]{Remark}
\newtheorem{red}[prop]{Reduction}
\newcommand{\lcm} {\mathop{\mathrm{lcm}}}
\newcommand{\GL}{\mathop{\mathrm{GL}}}
\newcommand{\PGL}{\mathop{\mathrm{PGL}}}
\newcommand{\into}{\hookrightarrow}
\newcommand{\BC} {\mathbb{C}}
\newcommand{\BF} {\mathbb{F}}
\newcommand{\BP} {\mathbb{P}}
\newcommand{\BQ} {\mathbb{Q}}
\newcommand{\BR} {\mathbb{R}}
\newcommand{\BZ} {\mathbb{Z}}
\newcommand{\BT}{\mathcal{BT}}
\newcommand{\CF}{\mathcal{F}}
\newcommand{\CO}{\mathcal{O}}
\newcommand{\Fm}{\mathfrak{m}}
\newcommand{\Cinf}{\BC_\infty}
\newcommand{\Finf}{F_\infty}
\newcommand{\hTag}{h_{\mathrm{Tag}}}
\begin{document}

\title{Heights and Isogenies of Drinfeld modules}
\author{{F}lorian {B}reuer, {F}abien {P}azuki, {M}ahefason {H}eriniaina {R}azafinjatovo}
\address{Florian Breuer. School of Mathematical and Physical Sciences,
University of Newcastle, Newcastle, Australia.}
\email{florian.breuer@newcastle.edu.au}
\address{Fabien Pazuki. University of Copenhagen, Institute of Mathematics, Universitetsparken 5, 2100 Copenhagen, Denmark, and Universit\'e de Bordeaux, IMB, 351, cours de la Lib\'eration, 33400 Talence, France.}
\email{fpazuki@math.ku.dk}
\address{Mahefason Heriniaina Razafinjatovo.
University of Antananarivo,
Madagascar.}
\email{razafpouf@univ-antananarivo.mg}
\maketitle

\begin{abstract}
We provide explicit bounds on the difference of heights of isogenous Drinfeld modules. We derive a finiteness result in isogeny classes. In the rank 2 case, we also obtain an explicit upper bound on the size of the coefficients of modular polynomials attached to Drinfeld modules.
\end{abstract}

{\flushleft
\textbf{Keywords:} Drinfeld modules, heights, isogenies, modular polynomials.\\
\textbf{Mathematics Subject Classification:} 11G09, 11G50, 14G17, 14G40. }

\begin{center}
---------
\end{center}

\thispagestyle{empty}

\section{Introduction}
Let $E$ and $E'$ be two elliptic curves over a number field, linked by an isogeny $f : E \to E'$. Can we compare their heights? In the case of the Faltings height, a classical result \cite{Falt, Raynaud} states that 
\begin{equation}\label{rayeq}
|h_{\mathrm{Falt}}(E) - h_{\mathrm{Falt}}(E')| \leq \frac{1}{2}\log\deg f.
\end{equation}
A more elementary height is $h(j(E))$, the Weil height of the $j$-invariant of $E$. For this height, the second author \cite{Paz} proved 
\begin{equation}\label{pazeq}
|h(j(E)) - h(j(E'))| \leq 9.204 + 12 \log\deg f.
\end{equation}
The proof of (\ref{pazeq}) involves modifying the Faltings height at the infinite places so that the result can be deduced from (\ref{rayeq}).

In the present paper, we consider function field analogues of these results. Consider two Drinfeld $\BF_q[t]$-modules of rank $r \geq 2$ linked by an isogeny $f : \phi \to \phi'$. There are several notions of height of a Drinfeld module; the best analogue of the Faltings height was defined by Taguchi \cite{Tag}, who also proved a variant of (\ref{rayeq}) for Drinfeld modules (see Lemma \ref{TaguchiIsog} below).

For the more elementary height $h_G$ associated to the coefficients of a Drinfeld module, we prove an analogue of (\ref{pazeq}) of the form
\begin{equation}\label{oureq}
|h_G(\phi') - h_G(\phi)| \; \leq\;
\log_q\deg f + \left(\frac{q}{q-1} - \frac{q^r}{q^r-1}\right).
\end{equation} 

Our basic approach is somehow similar to that in \cite{Paz}, with some natural changes: we use analytic estimates based on the technology developed by Gekeler \cite{Gek0, Gek1, GekCrelle}, notably the fundamental domain for the moduli space of Drinfeld modules, Bruhat-Tits buildings, and we conclude by an invocation of Taguchi's Isogeny Lemma. Combined with a deeper result of David-Denis \cite{DD}, this allows us to give a new proof of the finiteness of isomorphism classes of Drinfeld modules over a global function field within each isogeny class, see Corollary \ref{finiteness}. A variant of (\ref{oureq}) in the special case $r=2$ allows us to deduce explicit estimates on the height of Drinfeld modular polynomials in the rank two case, see Proposition \ref{Drinfeld modular} and equation (\ref{Drinfeld modular asymp}) below.

The layout of this paper is as follows. In Section \ref{sec:heights} we define heights associated to Drinfeld modules. Our main results are stated in Section \ref{sec:main}.

In Section \ref{sec:Taguchi} we introduce the notion of a reduced Drinfeld module and define Taguchi's height. Our main results are then proved in Section \ref{sec:analytic}, where we compute various analytic estimates. Finally, we deduce the upper bound on the coefficients of Drinfeld modular polynomials (in the rank two case) in Section \ref{sec:modular}.

\subsection*{Acknowledgement.} The authors are grateful to Fu-Tsun Wei for helpful discussions and the anonymous referee for constructive feedback. The authors thank the CNRS and the IRN GANDA for the support.
The first author thanks the Alexander-von-Humboldt Foundation for partial support. The second author thanks the projects ANR-14-CE25-0015 Gardio and ANR-17-CE40-0012 Flair.

\section{Heights of Drinfeld modules}\label{sec:heights}

\subsection{Places}
Let $A=\BF_q[t]$ and $F=\BF_q(t)$. To each place $v$ of $F$ we associate an absolute value $|\cdot|_v$ normalized as follows. A place of $F$ corresponding to a monic irreducible polynomial $P\in A$ is called a finite place, and we have $|x|_v = q^{-\deg(P) v_P(x)}$ for $x\in F$. There is one more place, denoted $\infty\in M_F$, with 
$|x|_\infty = q^{\deg(x)}$. 

For a finite extension $K/F$ we denote by $M_K$ the set of places of $K$. A place $v\in M_K$ is called infinite if it is an extension of $\infty$, otherwise it is called finite. The set of finite and infinite places of $K$ are denoted by $M_K^f$ and $M_K^\infty$, respectively.

To each place $v\in M_K$ we associate its absolute value normalized so that, for every $x\in F$ we have $|x|_v = |x|_w$, where $w\in M_F$ lies beneath $v$.

To each place $v\in M_K$ we also associate the ramification index $e_v$ (so $|K|_v \subset q^{\frac{1}{e_v}\BZ}$), the residual degree $f_v$ and the local degree $n_v = [K_v:F_v] = e_v f_v$.

We have the following two important properties:
\begin{itemize}
	\item {Product Formula:} For every $x\in K$, $\displaystyle \prod_{v\in M_K} |x|_v^{n_v} = 1$.
	\item {Extension Formula:} For every $w\in M_F$ we have $\displaystyle{[K:F] = \sum_{v|w}n_v}$.
\end{itemize}

Note that in articles like \cite{DD} and \cite{Tag}, the absolute values are normalized differently; the exponent $e_v$ is included in $|x|_v$, so in their situation the product formula holds with $n_v$ replaced by $f_v$.

Finally, for the remainder of this article $\log$ always means the logarithm to the base $q$.
\bigskip

We will associate to a Drinfeld module a number of different heights. Every height $h$ will be decomposed into a sum of local heights, $\displaystyle{ h = \frac{1}{[K:F]}\sum_{v\in M_K} h^v}$. We also write $\displaystyle{h^f = \frac{1}{[K:F]}\sum_{v \in M_K^f}h^v}$ and $\displaystyle{h^\infty = \frac{1}{[K:F]}\sum_{v\in M_K^\infty}h^v}$ for the finite and infinite components, respectively.

\subsection{Na\"ive heights}

Let $\phi$ be a Drinfeld $\BF_q[t]$-module of rank $r$ over $K$. We assume throughout this paper that $r\geq 2$ and that our Drinfeld modules are of generic characteristic. Then $\phi$ is characterised by 
\[
\phi_t(X) = tX + g_1X^q + g_2X^{q^2} + \cdots + g_rX^{q^r},\qquad g_i\in K, \; g_r\neq 0.
\]
We refer to the $g_1,\ldots,g_r$ as the {\em coefficients} of $\phi$.

Let $d=\lcm\{q-1,q^2-1,\ldots,q^r-1\}$. 
For $k=1,2,\ldots, r$, set
\begin{equation}
j_k := \frac{g_k^{d/(q^k-1)}}{g_r^{d/(q^r-1)}}\in K, \quad\text{and}\quad J=J(\phi)=(j_1,j_2,\ldots,j_r).
\end{equation}
Clearly $j_r=1$. These are isomorphism invariants of $\phi$. 

\begin{rem}\label{Potemine}
These invariants differ from those defined by Potemine in \cite[(2.5)]{Pot} in their exponents: we have chosen exponents such that each $j_k$ has the same denominator, whereas Potemine used the least integer exponents for each $j_k$. Nevertheless, it follows from \cite[Theorem 2.2]{Pot} that for each tuple $(a_1,a_2,\ldots,a_{r-1})\in\bar{F}^{r-1}$, there are at most finitely many $\bar{F}$-isomorphism classes of Drinfeld modules $\phi$ with $J(\phi)=(a_1,a_2,\ldots,a_{r-1},1)$. 
\end{rem}

Now we define the {\em $J$-height} of $\phi$: 
\begin{equation}
h_J(\phi) := h(J) = \frac{1}{[K:F]}\sum_{v\in M_K} n_v \log\max\{ |j_1|_v, |j_2|_v, \ldots, |j_r|_v\},
\end{equation}
which is just the logarithmic Weil height of the tuple $J$. This height, and its local components $h_J^v(\phi)$ for $v\in M_K$, are invariant under isomorphisms of $\phi$.

In the special case $r=2$, we see that $j_1=j=\frac{g_1^{q+1}}{g_2}$ is the usual $j$-invariant and $h(J) = h(j)$ is the usual height of $j\in K$.

\medskip

Next, consider the weighted projective space $W\BP:=\BP(q-1,q^2-1,\cdots,q^r-1)$, which is Proj of the graded polynomial ring $K[g_1,g_2,\ldots,g_r]$, where the indeterminates are assigned the weights $\deg g_k = q^k-1$.

It is well-known that $W\BP-V(g_r=0)$ is the coarse moduli space of rank $r$ Drinfeld modules. Indeed, if $\phi'$ is another Drinfeld module with $\phi'_t(X) = tX + g'_1X^q + g'_2X^{q^2} + \cdots + g'_rX^{q^r}$, then
$(g_1,g_2,\ldots,g_r)$ and $(g'_1,g'_2,\ldots,g'_r)$ represent the same point in $W\BP$ if and only if $\phi$ and $\phi'$ are isomorphic over some algebraically closed field. 

One can define heights on weighted projective spaces in the obvious way, and the height associated to the point representing $\phi$ is called the {\em graded height} of $\phi$:
\begin{equation}
h_G(\phi) := \frac{1}{[K:F]}\sum_{v\in M_K} n_v\log\max\{|g_1|_v^{1/(q-1)},|g_2|_v^{1/(q^2-1)},\ldots,|g_r|_v^{1/(q^r-1)}  \}.
\end{equation}
For a finite place $v\in M_K^f$, the local component $h_G^v(\phi) = n_v\log\max_i |g_i|_v^{1/(q^i-1)}$ equals Taguchi's $v(\phi)$,
see \cite[\S2]{Tag}.

From the product formula, we see that
\begin{equation}\label{lcm}
dh_G(\phi) = h_J(\phi),
\end{equation}
and so again $h_G(\phi)$ is invariant under isomorphism. However, the local components $h_G^v(\phi)$ depend 
on the choice of $\phi$ in its isomorphism class. 

\begin{prop}\label{Northcott}
Let $F=\BF_q(t)$ and let $K/F$ be a finite extension. Let $C >0$. 
Then there are only finitely many $\bar{F}$-isomorphism classes of rank $r$ Drinfeld modules $\phi$ defined over $K$ such that $h_J(\phi) < C$ (respectively $h_G(\phi) < C$). 
\end{prop}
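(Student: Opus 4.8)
The plan is to reduce the statement to the classical Northcott property of the Weil height over the \emph{fixed} function field $K$, and then feed in Potemine's finiteness result quoted in Remark~\ref{Potemine}.

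Since $d\, h_G(\phi) = h_J(\phi)$ by (\ref{lcm}), the conditions $h_J(\phi)<C$ and $h_G(\phi)<C$ cut out the same families up to rescaling $C$, so it suffices to treat $h_J$. Write $J=J(\phi)=(j_1,\dots,j_{r-1},1)\in K^r$; note $j_k\in K$ because each $g_k\in K$ and $d/(q^k-1)\in\BZ_{>0}$. As the coordinate $j_r=1$ occurs in $J$, at every place $v$ one has $\max_i|j_i|_v\ge\max\{|j_k|_v,1\}$, and summing over $v$ yields $h(j_k)\le h_J(\phi)<C$ for each $k=1,\dots,r-1$. Next I would invoke Northcott over $K$: since $K$ has finite constant field, $\{x\in K:\ h(x)<C\}$ is finite. (Briefly: $h(x)<C$ bounds the degree of the polar divisor of $x$; there are only finitely many effective divisors of bounded degree on the smooth model of $K$, and each corresponding Riemann--Roch space is a finite-dimensional $\BF_q$-vector space, hence finite.) Therefore only finitely many values of each $j_k$ can occur, so only finitely many tuples $J$ arise as $J(\phi)$ for $\phi$ in our family. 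Finally, for each admissible tuple $(a_1,\dots,a_{r-1},1)$, Remark~\ref{Potemine} (that is, \cite[Theorem~2.2]{Pot}) guarantees at most finitely many $\bar F$-isomorphism classes of rank $r$ Drinfeld modules $\phi$ with $J(\phi)=(a_1,\dots,a_{r-1},1)$; since the $j_k$ are $\bar F$-isomorphism invariants, every class in the family is accounted for. Combining the two finiteness statements proves the proposition.

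The only genuinely delicate point is the Northcott step: unlike over number fields, function fields with an infinite field of constants fail Northcott, so it is essential here that $K/F$ is a \emph{fixed} finite extension (so its constant field is finite), rather than working over $\bar F$. Everything else --- the equivalence of $h_G$ and $h_J$, the $\bar F$-isomorphism invariance of $J$, and the passage from a tuple $J$ to finitely many isomorphism classes --- is formal once these two inputs (function-field Northcott and Potemine's theorem) are granted; in particular the deeper David--Denis estimate plays no role here, only later in Corollary~\ref{finiteness}.
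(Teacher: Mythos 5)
Your proof is correct and follows essentially the same route as the paper: reduce to $h_J$ via $d\,h_G=h_J$, apply the Northcott property of the Weil height over the fixed field $K$ to get finitely many tuples $J$, and conclude with Potemine's theorem as in Remark~\ref{Potemine}. You simply spell out in more detail the function-field Northcott argument that the paper invokes as ``the usual Northcott Theorem.''
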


\begin{proof}
The usual Northcott Theorem for the Weil height implies that there are only finitely many $(j_1,j_2,\ldots,j_{r-1})\in K^{r-1}$ for which $h(j_1,j_2,\ldots,j_{r-1},1) < C$. The result now follows from Remark \ref{Potemine} and the identity $dh_G(\phi) = h_J(\phi)$.
\end{proof}

\section{Main results}\label{sec:main}

Let  $f : \phi \to \phi'$ be an isogeny of Drinfeld modules (still of generic characteristic) of degree $\deg f := \#\ker f$.  We may associate to $f$ a (not necessarily unique) dual isogeny $\hat{f} : \phi'\to\phi$ of degree $\deg\hat{f} \leq (\deg f)^{r-1}$, such that $\hat{f}\circ f = \phi_N$, where $N\in A$ is an element of minimal degree for which $\ker f \subset \phi[N]$, and similarly $f\circ \hat{f} = \phi'_N$. In particular, $\deg N = \frac{1}{r}\big(\log\deg f + \log\deg\hat{f}\big)\leq \log\deg f$.
 See for example \cite[Lemme 2.19]{DD}.

Denote by $\bar{K}$ an algebraic closure of $K$.
We now state our main result. 

\begin{thm}\label{main}
Let $f : \phi \to \phi'$ be an isogeny of rank $r$ Drinfeld modules over $\bar{K}$ and suppose $\ker f \subset \phi[N]$. 
\begin{enumerate}
\item[1.] We have
\begin{equation}
|h_G(\phi') - h_G(\phi)| \; \leq\;
\deg N + \left(\frac{q}{q-1} - \frac{q^r}{q^r-1}\right).
\end{equation}
\item[2.] Suppose $r=2$, then we have the following variant. We let $j=j_1(\phi)$ and $j'=j_1(\phi')$, then
\begin{equation}
h(j')-h(j) \leq \frac{q^2-1}{2}\log\deg f + \frac{q^2-1}{2}\log\left[1+\frac{1}{q}h(j')\right] + q.
\end{equation}
\end{enumerate}
\end{thm}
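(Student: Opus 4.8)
\emph{Proof strategy.} The plan is to derive both inequalities from the finite/infinite decomposition $h_G=h_G^f+h_G^\infty$, handling the finite places through Taguchi's height and the infinite places directly. The point of departure is the identity recalled in Section~\ref{sec:heights}: for a finite place $v$ the local term $h_G^v$ of a reduced model (Section~\ref{sec:Taguchi}) equals Taguchi's local height, so that $h_G^f=\hTag^f$. Writing $\hTag=h_G^f+\hTag^\infty$ and $\delta:=h_G^\infty-\hTag^\infty$, one gets
\[
h_G(\phi')-h_G(\phi)=\bigl(\hTag(\phi')-\hTag(\phi)\bigr)+\delta(\phi')-\delta(\phi).
\]
Taguchi's Isogeny Lemma~\ref{TaguchiIsog} bounds the first bracket by $\deg N$ (two-sidedly), so part~(1) reduces to showing $|\delta(\phi')-\delta(\phi)|\le\frac{q}{q-1}-\frac{q^r}{q^r-1}$; since $\delta$ involves only the infinite places, this is a place-by-place question.

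For part~(1) I would fix $v\mid\infty$, extend scalars to $\Cinf$, and invoke Gekeler's reduction theory \cite{Gek0,Gek1}: $\phi$ is uniformised by a rank $r$ lattice $\Lambda_v\subset\Cinf$, and choosing a basis $\lambda_1,\dots,\lambda_r$ realising its successive minima gives the reduced model. Feeding this basis into the product expansion of $e_{\Lambda_v}$ and the recursion for the coefficients $g_1,\dots,g_r$ expresses each $|g_k|_v^{1/(q^k-1)}$ through the $|\lambda_i|_v$: the leading coefficient carries the covolume of $\Lambda_v$ together with the factor $q^{q^r/(q^r-1)}$ arising from $|t^{q^r}-t|_v$, while the most degenerate configuration (a lattice close to a rank one one) instead contributes $q^{q/(q-1)}$ from $|t^q-t|_v$. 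Comparing $\log\max_k|g_k|_v^{1/(q^k-1)}$ with Taguchi's normalisation of the covolume therefore confines $\frac1{n_v}\bigl(h_G^v(\phi)-\hTag^v(\phi)\bigr)$ to the interval $\bigl[\frac{q^r}{q^r-1},\frac{q}{q-1}\bigr]$; averaging over $v\mid\infty$ with weights $n_v/[K:F]$ puts $\delta(\phi)$ in the same interval, whence $|\delta(\phi')-\delta(\phi)|\le\frac{q}{q-1}-\frac{q^r}{q^r-1}$, and part~(1) follows.

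For part~(2) I would run the same scheme in rank $2$, but formulated throughout in terms of $j=j_1=g_1^{q+1}/g_2$, using $h(j)=(q^2-1)h_G(\phi)$ from \eqref{lcm}. The improvement of the leading constant from $(q^2-1)\deg N$ (what a naive multiplication by $q^2-1$ would give) to $\frac{q^2-1}{2}\log\deg f$ comes from a sharper place-by-place estimate of the effect of the degree-$\deg f$ isogeny on $\log\max\{1,|j|_v\}$: at an infinite place one uses Gekeler's explicit description of $|j|$ on the fundamental domain for $\GL_2(A)$ \cite{GekCrelle}, at a finite place of bad reduction the analogous reduction data read from the Bruhat--Tits tree. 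This analysis leaves, at each place, an error of the shape $\frac{q^2-1}{2}n_v\log\bigl(1+\tfrac1q\log\max\{1,|j'|_v\}\bigr)$ up to an additive constant summing to $q$ (the quantity $\tfrac1q\log^+|j'|_v$ being, up to $O(1)$, the imaginary part of $\phi'$ at $v$); summing and applying Jensen's inequality to the concave function $\log$ replaces $\tfrac1{[K:F]}\sum_v n_v\log\bigl(1+\tfrac1q\log^+|j'|_v\bigr)$ by $\log\bigl(1+\tfrac1q h(j')\bigr)$, yielding the stated inequality. Only the one-sided inequality is claimed, so it is enough that the reverse comparison between the Weil height of $j$ and Taguchi's height carries a clean constant, and this is what limits the additive term to $q$.

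The main obstacle is the infinite-place analysis underlying both parts: turning Gekeler's asymptotic formulas for the $g_k$ and for $j$ on the fundamental domain into inequalities with the sharp explicit constants $\frac{q}{q-1}$, $\frac{q^r}{q^r-1}$ and $q$; controlling how the successive minima of $\Lambda_v$ (equivalently, the position in the fundamental domain and in the Bruhat--Tits tree) move under an isogeny of degree $\deg f$; and, for part~(2), organising the resulting error into the effective form $\log(1+\tfrac1q h(j'))$ rather than a cruder $O\bigl(\log(1+h(j'))\bigr)$. The finite places, by contrast, contribute only through the black-box application of Taguchi's Isogeny Lemma.
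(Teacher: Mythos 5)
Your decomposition and your treatment of the finite places (pass to everywhere stable reduction, identify $h_G^f$ with $\hTag^f$, feed the Taguchi part to the Isogeny Lemma) match the paper. The gap is the claim that $\delta(\phi)=h_G^\infty(\phi)-\hTag^\infty(\phi)$ is confined to the interval $\bigl[\tfrac{q^r}{q^r-1},\tfrac{q}{q-1}\bigr]$. Per infinite place, computed on a reduced model, this quantity equals $\log\max_i|g_i|^{1/(q^i-1)}+\tfrac1r\log D(\Lambda_v)$. The first summand is indeed confined to that interval --- this is Lemma \ref{AbsBound}, proved via Gekeler's fundamental domain --- but the covolume summand is unbounded above as the reduced lattice degenerates: for $r=2$ and $\Lambda=\omega A+A$ with $|\omega|=|\omega|_i=q^k$ one has $\tfrac12\log D(\Lambda)=k/2\to\infty$ while $\log\max_i|g_i|^{1/(q^i-1)}=\tfrac{q}{q-1}$ stays put. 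So $\delta$ is not an absolutely bounded function on the moduli space, and $|\delta(\phi')-\delta(\phi)|$ cannot be controlled without using the isogeny. The paper controls exactly this piece by the Analytic Isogeny Lemma \ref{IsogAnalytic}: the isogeny is multiplication by some $\alpha$ with $\alpha\Lambda_v\subset\Lambda'_v$, and reducedness of both lattices forces $|\alpha|\ge1$, giving $\log D(\Lambda_v)-\log D(\Lambda'_v)\le\log\deg f$ and the reverse bound with $\hat f$. That step costs $\tfrac1r\log\deg\hat f$, which is precisely the other half of $\deg N=\tfrac1r(\log\deg f+\log\deg\hat f)$; since you have already spent all of $\deg N$ on the Taguchi term via the two-sided bound $|\hTag(\phi')-\hTag(\phi)|\le\deg N$, even a repaired version of your accounting would yield $\deg N+\tfrac1r\log\deg\hat f+\bigl(\tfrac{q}{q-1}-\tfrac{q^r}{q^r-1}\bigr)$. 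The correct bookkeeping is one-sided and three-way: $\hTag(\phi')-\hTag(\phi)\le\tfrac1r\log\deg f$, the covolume comparison $\le\tfrac1r\log\deg\hat f$, the coefficient comparison $\le\tfrac{q}{q-1}-\tfrac{q^r}{q^r-1}$, then sum and symmetrize.

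For Part 2 the overall shape you describe is right, but the mechanism in the paper is simpler than what you sketch: the term $\tfrac{q^2-1}{2}\log\bigl(1+\tfrac1q h(j')\bigr)$ arises by discarding $-\log D(\Lambda_v)\le 0$ and bounding $\log D(\Lambda'_v)$ alone via Gekeler's relation $\log|j'|_v\ge qD(\Lambda'_v)$ on the fundamental domain (Lemma \ref{jgrowth}), followed by AM--GM; no analysis of the effect of the isogeny on $\log\max\{1,|j|_v\}$ is needed, and the finite places of bad reduction never enter except through Taguchi's lemma (your closing sentence is correct on this point, but it contradicts your description of the rank-2 argument).
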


This is the analogue of Theorem 1.1. of \cite{Paz}.

\begin{rem}
If $r \geq 3$, we cannot hope to get a similar result replacing $h_G$ with the height $h(j_k)$ of a single invariant, as the following example shows.

Fix a Drinfeld module $\phi$ and consider all isogenies $f : \phi \to \phi'$ of kernel $\ker f \cong A/tA$, where
\[
\phi_t(X) = tX + g_1X^q + X^{q^3}; \qquad
\phi'_t(X) = tX + g'_1X^q + g'_2X^{q^2} + X^{q^3}; \qquad
f(X) = f_0X + X^q.
\]

From $f\circ\phi_t = \phi'_t \circ f$, comparing coefficients of $X^q$ and $X^{q^3}$, we obtain
\begin{equation}\label{gs}
g'_1 = f_0^{-q}(f_0g_1+t^q-t) \quad\text{and}\quad 
g'_2 = f_0 - f_0^{q^3},\quad\text{respectively.}
\end{equation}
Since $\ker f \subset \ker\phi_t$, we write $\phi_t = P \circ f$, for some $P(X) = aX + bX^q + X^{q^2}$. Comparing coefficients gives
\[
a = f_0^{-1}t, \qquad
b = -f_0^{q^2} \qquad\text{and}\qquad
f_0^{-1}t - f_0^{q^2+q} = g_1.
\]
Thus $f_0$ is a root of 
\begin{equation}\label{allisogs}
X^{q^2+q+1}+g_1X-t=0.
\end{equation}
Conversely, every root $f_0$ of (\ref{allisogs}) produces an isogeny $f : \phi \to \phi'$ as above. For each such root, we obtain from (\ref{gs}),
\begin{equation}\label{gs2}
g'_1 = -f_0^{q^2+1} + f_0^{-q}t^q; \qquad
g'_2 = f_0 -f_0^{q^3}.
\end{equation}

In particular, if $h(g_1)$ is very large, then at least one of the roots $f_0$ of (\ref{allisogs}) has large height, and thus so does the corresponding $g'_2$. Then $h(j_2(\phi'))$ is large, whereas $h(j_2(\phi))=0$ and $\deg f = q$.
\end{rem}

The following result follows from Theorem \ref{main} and \cite[Thm 1.3]{DD}:

\begin{cor}
There exists an effectively computable constant $C$, depending only on $r$ and $q$, such that the following holds.
Suppose $\phi$ and $\phi'$ are rank $r$ Drinfeld $\BF_q[t]$-modules, defined over a finite extension $K/F$, which are isogenous over $\bar{K}$. Then 
\[
|h_G(\phi')-h_G(\phi)| \leq  10(r+1)^7 \log\big([K:F] h_G(\phi)\big) + C.
\]
\end{cor}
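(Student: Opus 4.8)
The plan is to combine the bound in Theorem~\ref{main}(1) with the main effective bound of David--Denis \cite[Thm~1.3]{DD} on the degree of a minimal isogeny. First I would invoke \cite[Thm~1.3]{DD}: if $\phi$ and $\phi'$ are isogenous rank $r$ Drinfeld modules over $\bar{K}$, then there exists an isogeny $f:\phi\to\phi'$ whose degree is bounded polynomially in $[K:F]$ and in a suitable height of $\phi$ — explicitly of the shape $\log\deg f \leq c_1(r)\log\big([K:F]\,h(\phi)\big) + c_2(r,q)$, where the height $h(\phi)$ appearing there is comparable, up to the multiplicative constant $d=\lcm\{q-1,\ldots,q^r-1\}$ from equation~(\ref{lcm}) and an additive constant, to $h_G(\phi)$. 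The numerical constant $10(r+1)^7$ in the statement is exactly the David--Denis exponent (or a clean majorant of it), so the first step is really just a careful transcription of their theorem into our normalization of heights and absolute values, keeping track of the fact (noted in Section~\ref{sec:heights}) that \cite{DD} normalizes $|\cdot|_v$ with $f_v$ rather than $n_v$.

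Next I would feed this bound on $\log\deg f$ into Theorem~\ref{main}(1). Since $\ker f\subset\phi[N]$ with $\deg N\leq\log\deg f$, part~(1) gives
\[
|h_G(\phi')-h_G(\phi)| \;\leq\; \log\deg f + \left(\frac{q}{q-1}-\frac{q^r}{q^r-1}\right)
\;\leq\; \log\deg f + 1,
\]
the last inequality because $\frac{q}{q-1}-\frac{q^r}{q^r-1}<\frac{q}{q-1}\le 2$ and in fact $<1$ for all $q\geq2$, $r\geq2$. Substituting the David--Denis bound on $\log\deg f$ then yields
\[
|h_G(\phi')-h_G(\phi)| \;\leq\; 10(r+1)^7\log\big([K:F]\,h_G(\phi)\big) + C
\]
for an effectively computable $C=C(r,q)$ absorbing: the additive constant from \cite[Thm~1.3]{DD}, the normalization discrepancies between $h(\phi)$ and $h_G(\phi)$, the constant $1$ above, and the error incurred when replacing $\log\big([K:F]h(\phi)\big)$ by $\log\big([K:F]h_G(\phi)\big)$ (a bounded shift plus, if needed, a slight enlargement of the leading constant, which is why one writes $10(r+1)^7$ rather than the bare David--Denis exponent).

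The main obstacle — and the only genuinely delicate point — is the bookkeeping in matching the height $h(\phi)$ used by David--Denis with our $h_G(\phi)$: one must check that the substitution $h(\phi)\rightsquigarrow h_G(\phi)$ inside the logarithm costs only an additive constant (not a multiplicative blow-up), and that the differing absolute-value normalizations in \cite{DD} do not silently rescale the leading coefficient. A secondary subtlety is that \cite[Thm~1.3]{DD} bounds the degree of \emph{some} isogeny in the class, so one should note that Theorem~\ref{main}(1) applies to that particular $f$ (its conclusion depends only on $N$, hence on $\deg f$), and that by symmetry of $h_G$ the resulting inequality is already two-sided. Once these normalization checks are done, the corollary is immediate; no further analytic input beyond Section~\ref{sec:analytic} is needed.
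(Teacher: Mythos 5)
Your proposal is correct and follows essentially the same route as the paper: invoke \cite[Thm 1.3]{DD} to produce an isogeny of controlled degree, feed $\log\deg f$ into Theorem \ref{main}(1) via $\deg N\leq\log\deg f$, and absorb all normalization discrepancies into $C$. The only small imprecision is that the relevant comparison of heights is $h(\phi)\leq (q^r-1)h_G(\phi)$ (coming from the weights $q^i-1$ of the coefficients), not the constant $d=\lcm\{q-1,\ldots,q^r-1\}$ from (\ref{lcm}); as you correctly observe, this multiplicative factor becomes an additive constant inside the logarithm, so the argument goes through unchanged.
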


\begin{proof}
By \cite[Thm 1.3]{DD}, there exists an effectively computable constant $c_2=c_2(r,q)$ and an isogeny 
$f:\phi \to \phi'$ of degree
\[
\deg(f) \leq c_2 \big([K:F] h(\phi)\big)^{10(r+1)^7}.
\]
Here $h(\phi)$ denotes a height function defined in terms of the coefficients of $\phi$ by $h(\phi) = \max\{h(g_1),\ldots,h(g_r)\}$. It is easy to see that $h(\phi) \leq (q^r-1)h_G(\phi)$, so
\begin{eqnarray*}
|h_G(\phi') - h_G(\phi)| & \leq & \deg N + \left(\frac{q}{q-1} - \frac{q^r}{q^r-1}\right) \\
& \leq & \log\deg f + \left(\frac{q}{q-1} - \frac{q^r}{q^r-1}\right) \\
& \leq & \log\big(c_2\big([K:F] (q^r-1)h_G(\phi)\big)^{10(r+1)^7}\big) + \left(\frac{q}{q-1} - \frac{q^r}{q^r-1}\right).
\end{eqnarray*}
The result follows.
\end{proof}

\bigskip

Applying Proposition \ref{Northcott}, we recover the following result, which was originally proved by Taguchi in \cite{Tag2}.

\begin{cor}\label{finiteness}
Each $\bar{K}$-isogeny class of Drinfeld modules defined over $K$ contains only finitely many $\bar{K}$-isomorphism classes of Drinfeld modules. 
\end{cor}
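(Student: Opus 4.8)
The plan is to deduce this directly from the corollary proved just above, together with the Northcott property of Proposition \ref{Northcott}. Since $K/F$ is finite we have $\bar K = \bar F$, so a $\bar K$-isomorphism class is the same thing as a $\bar F$-isomorphism class, and Proposition \ref{Northcott} can be applied verbatim.

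First I would fix one Drinfeld module $\phi$ in the given isogeny class, defined over $K$; it has some rank $r$, and since isogenies preserve the rank, every member of the isogeny class is a rank $r$ Drinfeld module. Now let $\phi'$ be an arbitrary rank $r$ Drinfeld module defined over $K$ and isogenous to $\phi$ over $\bar K$. Applying the corollary above --- the one deduced from Theorem \ref{main} and \cite[Thm 1.3]{DD} --- to the pair $(\phi,\phi')$ yields
\[
h_G(\phi') \;\leq\; h_G(\phi) + 10(r+1)^7\log\big([K:F]\,h_G(\phi)\big) + C,
\]
and the right-hand side is a constant $C' = C'(\phi,[K:F],r,q)$ which is independent of the choice of $\phi'$.

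Finally I would invoke Proposition \ref{Northcott} with the constant $C'+1$: there are only finitely many $\bar F$-isomorphism (equivalently, $\bar K$-isomorphism) classes of rank $r$ Drinfeld modules over $K$ whose graded height is $< C'+1$. Every $\phi'$ in the isogeny class lies in one of these finitely many classes, which proves the statement.

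There is essentially no real obstacle at this point: all the substance is contained in the corollary above (hence in Theorem \ref{main} combined with the isogeny estimate \cite[Thm 1.3]{DD}) and in the Northcott property. The only two points worth spelling out are that $\bar K$-isomorphism coincides with $\bar F$-isomorphism because $K/F$ is finite, and that the rank is an isogeny invariant, so that the whole argument takes place inside the fixed rank $r$.
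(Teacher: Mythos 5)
Your proposal is correct and is exactly the argument the paper intends: the corollary preceding this one gives a uniform bound on $h_G(\phi')$ over the isogeny class, and Proposition \ref{Northcott} (Northcott for $h_G$) then yields finiteness. The paper leaves this as a one-line remark, so your spelled-out version (including the observations that $\bar K=\bar F$ and that rank is an isogeny invariant) matches its approach.
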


Note that our approach would lead the interested reader to an explicit bound on the number of $\bar{K}$-isomorphism classes within a $\bar{K}$-isogeny class.

\section{Lattices and Taguchi's height}\label{sec:Taguchi}

\subsection{Lattices}
Let $\Finf=\BF_q((\frac{1}{t}))$ be the completion of $F$ at the place $\infty$ and $\Cinf = \hat{\bar{F}}_\infty$ the completion of an algebraic closure of $\Finf$; it is complete and algebraically closed and plays the role of the complex numbers in characteristic $p>0$. Recall that $A=\BF_q[t]$.

A {\em lattice} of rank $r\geq 1$ is an $A$-submodule $\Lambda\subset\Cinf$ of the form $\Lambda = \omega_1A + \omega_2A + \cdots + \omega_rA$, where the $\omega_1,\omega_2,\ldots,\omega_r \in\Cinf$ are $\Finf$-linearly independent.

A {\em successive minimum basis} for a lattice $\Lambda$ is an $A$-basis $(\omega_1,\omega_2,\ldots,\omega_r)$ for $\Lambda$ satisfying the properties
\[
|\omega_1| \geq |\omega_2| \geq \cdots \geq |\omega_r|
\]
and
\[
|\omega_k| = \inf\big(\{ \lambda - \sum_{i=k+1}^r a_i\omega_i \;|\; a_{k+1},a_{k+2},\ldots,a_r \in A, \; \lambda\in\Lambda\}\smallsetminus\{0\}\big) \qquad\text{for $k=1,2,\ldots,r$}.
\]
In other words, $\omega_r$ is a minimal non-zero element of $\Lambda$ and each $\omega_k$ is minimal among the non-zero elements of $\Lambda$ not spanned by the $\omega_{k+1},\omega_{k+2},\ldots,\omega_r$. We can think of such a basis as being an ``orthogonal'' basis. Every lattice has a successive minimum basis, and we define the {\em covolume} of $\Lambda$ by
\begin{equation}
D(\Lambda) := |\omega_1| |\omega_2| \cdots |\omega_r|,
\end{equation}
where $(\omega_1,\omega_2,\ldots,\omega_r)$ is any successive minimum basis of $\Lambda$. By \cite[(4.1)]{Tag} or \cite[Prop. 3.1]{GekCrelle}, this is independent  of the choice of successive minimum basis.

The covolume of a lattice satisfies the following desirable properties.

\begin{lem}
Let $\Lambda \subset \Cinf$ be a lattice of rank $r$. 
\begin{enumerate}
	\item[1.] Choose an $\BF_q[t]$-basis $(\omega_1,\omega_2,\ldots,\omega_r)$ of $\Lambda$. Let $\gamma\in\GL_r(\Finf)$ and denote by
	 $\gamma\Lambda$ the lattice spanned by $(\omega_1,\omega_2,\ldots,\omega_r)\gamma^{\mathrm{T}}$. Then
	\[
	D(\gamma\Lambda) = |\det\gamma|D(\Lambda).
	\]
	\item[2.] Let $c\in\Cinf$, then 
	\[
	D(c\Lambda) = |c|^rD(\Lambda).
	\]
	\item[3.] Let $\Lambda'$ be a lattice of rank $r$ such that $\Lambda \subset \Lambda' \subset \Cinf$. Then
	\[
	(\Lambda' : \Lambda) = D(\Lambda)/D(\Lambda').
	\]
\end{enumerate}
\end{lem}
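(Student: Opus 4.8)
The plan is to establish the three properties by working directly with successive minimum bases, reducing everything to the rank $r=1$ case where a lattice is simply $\omega A$ with covolume $|\omega|$, or to an index computation.

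For part 2, the scaling property $D(c\Lambda)=|c|^rD(\Lambda)$ is the easiest: if $(\omega_1,\ldots,\omega_r)$ is a successive minimum basis of $\Lambda$, then $(c\omega_1,\ldots,c\omega_r)$ is a successive minimum basis of $c\Lambda$ (the minimality conditions are preserved because $|c\lambda - \sum a_i c\omega_i| = |c|\,|\lambda - \sum a_i\omega_i|$ and $|c|>0$), so $D(c\Lambda) = \prod_i|c\omega_i| = |c|^r\prod_i|\omega_i| = |c|^rD(\Lambda)$. This also handles the case $c\in\Finf$ inside part 1.

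For part 3, I would first check that $\Lambda\subset\Lambda'$ with both of rank $r$ forces the index $(\Lambda':\Lambda)$ to be finite: $\Lambda'/\Lambda$ is a finitely generated torsion $A$-module, hence finite since $A/\mathfrak{a}$ is finite for any nonzero ideal $\mathfrak{a}$. Then the identity $(\Lambda':\Lambda)=D(\Lambda)/D(\Lambda')$ I would prove by choosing compatible bases: by the elementary divisor theorem over the PID $A$, there is an $A$-basis $(\eta_1,\ldots,\eta_r)$ of $\Lambda'$ and nonzero $a_1,\ldots,a_r\in A$ such that $(a_1\eta_1,\ldots,a_r\eta_r)$ is an $A$-basis of $\Lambda$, whence $(\Lambda':\Lambda) = \prod_i\#(A/a_iA) = \prod_i q^{\deg a_i} = \prod_i|a_i|_\infty^{-1}$ — wait, I must be careful with the normalization. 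With $|a|_\infty = q^{\deg a}$, we have $\#(A/aA) = q^{\deg a} = |a|_\infty$, so $(\Lambda':\Lambda) = \prod_i|a_i|$. The remaining point is that $D$ is multiplicative along such a tower in the sense that $D(\Lambda) = \big(\prod_i|a_i|\big)D(\Lambda')$; since $(\eta_1,\ldots,\eta_r)$ need not be a successive minimum basis, I would invoke part 1: the lattice $\Lambda$ is $\gamma\Lambda'$ for $\gamma = \mathrm{diag}(a_1,\ldots,a_r)\in\GL_r(\Finf)$, so $D(\Lambda) = |\det\gamma|D(\Lambda') = \big(\prod_i|a_i|\big)D(\Lambda')$, which rearranges to the claim.

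Thus part 1 is really the crux, and it is the main obstacle. The statement is that $D(\gamma\Lambda) = |\det\gamma|\,D(\Lambda)$ for any $\gamma\in\GL_r(\Finf)$, independent of which $A$-basis we started with. The key reduction is that $\GL_r(\Finf)$ is generated by elementary matrices (transvections), permutation matrices, and diagonal matrices; since $\det$ is multiplicative and $D$ is well-defined on lattices (independent of basis, by the cited \cite[(4.1)]{Tag} or \cite[Prop. 3.1]{GekCrelle}), it suffices to verify the formula for each type of generator. For permutation matrices and for transvections $\omega_i\mapsto\omega_i + a\omega_j$ ($a\in\Finf$), the lattice $\gamma\Lambda$ — here one must be slightly careful since $a\notin A$ in general, so $\gamma\Lambda$ genuinely differs from $\Lambda$ as a subset of $\Cinf$, but it is still a rank-$r$ $A$-lattice — has the same covolume as $\Lambda$ because one can exhibit a common "orthogonalization": a successive minimum basis of $\Lambda$, suitably modified by $\Finf$-coefficients, remains a successive minimum basis up to the ultrametric inequality, and the product of the norms is unchanged since $|\det\gamma|=1$ for these generators. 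For diagonal $\gamma = \mathrm{diag}(c_1,\ldots,c_r)$ with $c_i\in\Finf^\times$, one argues as in part 2 coordinate by coordinate. The subtle point throughout is that successive minimum bases behave well under these operations only because the value group $|\Finf^\times| = q^{\BZ}$ is discrete and the norm is ultrametric; I would lean on the basis-independence of $D$ to avoid tracking how a specific successive minimum basis transforms, and instead compute $D(\gamma\Lambda)$ using whatever basis of $\gamma\Lambda$ is most convenient, then compare. A cleaner alternative, if available, is to cite directly the formula $D(\Lambda) = $ (constant)$\cdot|\det M|$ where $M$ is any change-of-basis matrix from a successive minimum basis to an arbitrary basis — but absent that, the generator-by-generator verification is the honest route, and checking the transvection case carefully is where the real work lies.
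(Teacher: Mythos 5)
Your treatment of Parts 2 and 3 coincides with the paper's: Part 2 follows from the definition because multiplying the whole lattice by a single scalar $c$ preserves the successive-minimum property of a basis, and Part 3 is obtained exactly as in the paper by writing $\Lambda=\gamma\Lambda'$ for a suitable integral $\gamma$ (elementary divisors over the PID $A$) and applying Part 1 together with $\#(A/aA)=q^{\deg a}=|a|_\infty$. Those two parts are fine.

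The problem is Part 1, which you correctly identify as the crux but do not actually prove. The paper does not prove it either: it simply invokes \cite[Prop.\ 4.4]{Tag}. You instead propose a generator-by-generator verification over $\GL_r(\Finf)$, and the two cases that carry all the content are left as assertions. For a transvection $\omega_i\mapsto\omega_i+a\omega_j$ with $a\in\Finf$, your justification that ``the product of the norms is unchanged since $|\det\gamma|=1$'' is precisely the statement to be proved, not a reason for it; the honest route here is to split $a=a_0+a_1$ with $a_0\in A$ (which does not change the lattice at all) and $|a_1|<1$, and then show that the successive minima of the perturbed lattice agree with those of $\Lambda$ --- this is a genuine argument about orthogonality of successive minimum bases (the key fact being that for such a basis $\bigl|\sum_i x_i\omega_i\bigr|=\max_i|x_i|\,|\omega_i|$ for $x_i\in\Finf$), and you never supply it. For a diagonal matrix $\mathrm{diag}(c_1,\ldots,c_r)$ with unequal $c_i\in\Finf^\times$, ``argue as in part 2 coordinate by coordinate'' does not work: rescaling individual basis vectors by different constants destroys both the ordering and the minimality conditions of a successive minimum basis (and when $|c_i|=1$ but $c_i\notin\BF_q$ the lattice genuinely changes), so the covolume computation does not reduce to Part 2. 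Either cite Taguchi's Proposition 4.4 as the paper does, or carry out the orthogonality argument for successive minimum bases in full; as written, Part 1 --- and hence Part 3, which depends on it --- is not established.
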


\begin{proof}
Part 1 is \cite[Prop. 4.4]{Tag} applied to the $A$-lattice $\Lambda$ inside the $\Finf$-vector space $V\subset\Cinf$ spanned by $(\omega_1,\omega_2,\ldots,\omega_r)$.

Part 2 follows from the definition and Part 3 follows from Part 1 as $\Lambda = \gamma\Lambda'$ for a suitable $\gamma\in\GL_r(F)$ with coefficients in $A$ and $|\det\gamma| = (\Lambda' : \Lambda)$.
\end{proof}

The lattice $\Lambda\subset\Cinf$ is said to be {\em reduced} if it has a successive minimum basis $(\omega_1,\omega_2,\ldots,\omega_r)$ with $\omega_r=1$. 
Equivalently, 
\begin{quote}
$\Lambda$ is reduced if and only if $1\in\Lambda$ and every non-zero $\lambda\in\Lambda$ satisfies $|\lambda|\geq 1$.
\end{quote}

Every Drinfeld module $\phi$ over $\Cinf$ is associated to a rank $r$ lattice $\Lambda\subset\Cinf$ and vice versa. We call the Drinfeld module $\phi$ {\em reduced} if its associated lattice is reduced. Every Drinfeld module is isomorphic over $\Cinf$ to a reduced Drinfeld module. (The analogous condition on an elliptic curve is to correspond to a point in the fundamental domain of the upper half-plane.)

\begin{lem}[Analytic Isogeny Lemma]\label{IsogAnalytic}
Let $f : \phi \to \phi'$ be an isogeny of reduced Drinfeld modules over $\Cinf$ with associated reduced lattices 
$\Lambda, \Lambda'\subset\Cinf$, respectively. Then 
\begin{equation}
-\log\deg\hat{f} \;\leq\; \log D(\Lambda) - \log D(\Lambda') \;\leq\; \log\deg f.
\end{equation}
\end{lem}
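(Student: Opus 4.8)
The plan is to translate the isogeny into an inclusion of lattices and then apply Part 3 of the previous lemma. Recall that an isogeny $f:\phi\to\phi'$ of Drinfeld modules corresponds, on the analytic side, to an inclusion $\Lambda \subset c\Lambda'$ for some $c\in\Cinf^\times$, with $\deg f = (c\Lambda' : \Lambda)$. After rescaling we may assume $c=1$, so that $\Lambda\subset\Lambda'$ and $\deg f = (\Lambda' : \Lambda) = D(\Lambda)/D(\Lambda')$ by Part 3 of the lemma. Taking logarithms immediately yields $\log D(\Lambda) - \log D(\Lambda') = \log\deg f \geq 0$, which is the right-hand inequality. Some care is needed here because we are told both $\Lambda$ and $\Lambda'$ are \emph{reduced}, i.e.\ we cannot freely rescale and still keep both reduced; so the scaling constant $c$ used to realize the inclusion $\Lambda\subset c\Lambda'$ need not be $1$. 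I would handle this by first working with arbitrary representatives, using $D(c\Lambda') = |c|^r D(\Lambda')$ from Part 2, and only afterwards noting that the quantity $\log D(\Lambda) - \log D(\Lambda')$ is what appears; the key point is that $\deg f = (c\Lambda':\Lambda)$ is intrinsic to $f$, while $\log D$ of a reduced lattice is pinned down.

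For the left-hand inequality, the natural move is to apply the right-hand inequality to the dual isogeny $\hat f : \phi'\to\phi$. Since $\hat f\circ f = \phi_N$ acts on lattices as multiplication by $N\in A$, we have on the analytic side the chain $N\Lambda \subset \Lambda \subset \Lambda'$ (after suitable normalization), where the inclusion $\Lambda\subset\Lambda'$ comes from $f$ and $\Lambda'\subset N^{-1}\Lambda$, i.e.\ $N\Lambda'\subset\Lambda$, comes from $\hat f$. Applying the already-proved upper bound to $\hat f$ gives $\log D(\Lambda') - \log D(\Lambda) \leq \log\deg\hat f$, i.e.\ $\log D(\Lambda) - \log D(\Lambda') \geq -\log\deg\hat f$, which is exactly the left-hand inequality. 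One should double-check that the analytic description of the dual isogeny is consistent with the algebraic one used earlier (where $\hat f\circ f=\phi_N$ and $f\circ\hat f=\phi'_N$), so that $\deg\hat f$ appearing here is the same quantity as in Section~\ref{sec:main}.

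The main obstacle, and the step deserving the most care, is bookkeeping of the scaling constants so that the statement about \emph{reduced} lattices is correct: an isogeny only determines the lattices up to simultaneous scaling, whereas reducedness fixes a specific representative in each homothety class, and these two normalizations need not be compatible with the single constant $c$ realizing $\Lambda\subset c\Lambda'$. I would resolve this by proving the two-sided bound first for \emph{some} choice of representatives (where one has a genuine inclusion with index $\deg f$), then invoking Part 2 of the lemma to see how $\log D$ changes under rescaling and checking that, because a lattice $\Lambda$ and its unique reduced rescaling $u\Lambda$ satisfy a controlled relation, the inequalities are unaffected — or, more cleanly, by simply observing that both $D(\Lambda)$ (for $\Lambda$ reduced) and $\deg f$, $\deg\hat f$ are well-defined, and that the chain of inclusions $N\Lambda\subset\Lambda\subset\Lambda'$ with the two indices $(\Lambda':\Lambda)=\deg f$ and $(\Lambda:N\Lambda)/(\Lambda':N\Lambda) $ relating $\deg\hat f$ to $\deg f$ via $\deg f\cdot\deg\hat f = q^{r\deg N}$ forces the stated bounds. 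Everything else is a direct application of the multiplicativity of covolumes in towers.
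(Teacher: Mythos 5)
Your overall strategy is the same as the paper's: realise the isogeny analytically as multiplication by a scalar $\alpha$ with $\alpha\Lambda\subset\Lambda'$ and $\ker f\cong\Lambda'/\alpha\Lambda$, use multiplicativity of covolumes to get $D(\Lambda)/D(\Lambda')=|\alpha|^{-r}\deg f$, and obtain the lower bound by applying the upper bound to $\hat f$. However, the step you yourself flag as ``the main obstacle'' is the one you never actually carry out, and it is the entire content of the lemma. You cannot ``assume $c=1$'': when both $\Lambda$ and $\Lambda'$ are reduced there is in general no inclusion $\Lambda\subset\Lambda'$, only $\alpha\Lambda\subset\Lambda'$ for some scalar, and then $\log D(\Lambda)-\log D(\Lambda')=\log\deg f-r\log|\alpha|$. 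The upper bound is therefore \emph{equivalent} to showing $|\alpha|\geq 1$, and neither of your two proposed fixes establishes this. The assertion that after passing to reduced representatives ``the inequalities are unaffected'' is backwards: the rescaling is precisely what degrades the exact equality $D(\Lambda)/D(\Lambda')=\deg f$ (valid for a genuine inclusion) to a one-sided inequality, and the direction of that inequality is the whole point. Likewise, ``$\deg f\cdot\deg\hat f=q^{r\deg N}$ forces the stated bounds'' is not an argument: that identity is symmetric in $f$ and $\hat f$ and cannot by itself single out which of the two bounds goes which way.

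The missing observation is a one-liner, and it is where the hypothesis that \emph{both} lattices are reduced enters: since $\Lambda$ is reduced, $1\in\Lambda$, hence $\alpha=\alpha\cdot 1\in\alpha\Lambda\subset\Lambda'$; since $\Lambda'$ is reduced, every nonzero element of $\Lambda'$ has absolute value at least $1$, so $|\alpha|\geq 1$ and $D(\Lambda)/D(\Lambda')=|\alpha|^{-r}\deg f\leq\deg f$. (In your rescaling language: if $\Lambda\subset\Lambda'$ and $u\Lambda$, $u'\Lambda'$ are reduced rescalings, then the shortest nonzero vector of $\Lambda$ is at least as long as that of $\Lambda'$, whence $|u|\leq|u'|$ and the equality becomes $\leq$ in the correct direction.) With that inserted, your argument — including deducing the lower bound from the dual isogeny — is exactly the paper's proof.
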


\begin{proof}
Analytically, the isogeny $f : \phi \to \phi'$ is given by multiplication by $\alpha\in\Cinf$ for which $\alpha\Lambda\subset\Lambda'$ and $\ker f \cong \Lambda'/\alpha\Lambda$. Thus
\[
\deg f = (\Lambda' : \alpha\Lambda) = D(\alpha\Lambda)/D(\Lambda') = |\alpha|^rD(\Lambda)/D(\Lambda'),
\]
so $D(\Lambda)/D(\Lambda') = |\alpha|^{-r}\deg f$. Since $\Lambda$ is reduced, $1\in\Lambda$ and thus $\alpha\cdot 1 \in\Lambda'$. Since $\Lambda'$ is reduced, we must have $|\alpha|\geq 1$, giving 
\[
D(\Lambda)/D(\Lambda') \leq \deg f.
\]
This proves the upper bound, and the lower bound follows by applying this to $\hat{f} : \phi' \to \phi$.
\end{proof}

\subsection{Taguchi's height}
Let $\phi$ be a Drinfeld module defined over a finite extension  $K/F$. We recall that $\phi$ is said to have {\em stable reduction} at a place $v\in M_K^f$ if it is isomorphic over $F$ to a Drinfeld module $\tilde\phi$ defined over the valuation ring $\CO_v\subset F$ of $v$ whose reduction modulo the maximal ideal $\Fm_v$ of $\CO_v$ is a Drinfeld module of positive rank over the residue field $\CO_v/\Fm_v$. Equivalently, $h_G^v(\phi)\in\BZ$, see \cite[p. 301]{Tag}. We say that $\phi$ has {\em everywhere stable reduction} if it has stable reduction at every finite place $v\in M_K^f$, equivalently if $h_G^v(\phi) \in\BZ$ for every $v\in M_K^f$.
By \cite[Lemme 2.10]{DD}, every Drinfeld module over $K$ acquires everywhere stable reduction after replacing $K$ by a finite extension thereof.

In \cite{Tag} Taguchi defines the {\em differential height} of $\phi$ as the degree of the metrised conormal line-bundle along the unit section associated to a minimal model of~$\phi$. It serves as the analogue of the Faltings height. 
All we need here is the identity (5.9.1) of \cite{Tag}, valid for Drinfeld modules with everywhere stable reduction, which we adopt as our definition:
\begin{eqnarray}
\hTag(\phi) & := & \frac{1}{[K:F]}\left[ \sum_{v\in M_K^f} h_G^v(\phi) - \sum_{v\in M_K^\infty} n_v \log D(\Lambda_v)^{1/r}\right] \\
& = & \hTag^f(\phi) + \hTag^\infty(\phi), \nonumber
\end{eqnarray}
where we pose $\displaystyle{\hTag^f(\phi)=\frac{1}{[K:F]} \sum_{v\in M_K^f} h_G^v(\phi)}$ and  $\displaystyle{\hTag^\infty(\phi)=- \frac{1}{[K:F]}\sum_{v\in M_K^\infty} n_v \log D(\Lambda_v)^{1/r}}$.

Notice that the sign is included in the definition of $\hTag^\infty(\phi)$ - the reader should keep this in mind when reading the remaining calculations in this paper.

Here $\Lambda_v \subset\BC_\infty$ is the lattice associated to the Drinfeld module $\phi^\sigma$ over $\BC_\infty$ obtained by embedding the coefficients $g_k$ into $\BC_\infty$ via the embedding $\sigma : K \into \BC_\infty$ associated to the infinite place $v$. 

We see that the finite part coincides with the finite part of our graded height,
\[
\hTag^f(\phi) = h_G^f(\phi).
\]

Our definition of $\hTag(\phi)$ only coincides with Taguchi's differential height when $\phi$ has stable reduction at every finite place. For the general case, the reader will find an excellent treatment of Taguchi's height in \cite[\S5.1]{Wei}. 
We add a proof that $\hTag(\phi)$ as defined above satisfies the following desirable properties.
\begin{lem}\label{TagHeightInvariance}
Let $\phi$ be a Drinfeld module with everywhere stable reduction, defined over a global function field~$K$.
\begin{enumerate}
	\item[1.] $\hTag(\phi)$, $\hTag^f(\phi)$ and $\hTag^\infty(\phi)$ do not depend on the choice of the field~$K$.
	\item[2.] $\hTag(\phi)$ is invariant under $\bar{K}$-isomorphism.
\end{enumerate}
\end{lem}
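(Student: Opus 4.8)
The plan is to verify both invariance properties by tracking how each of the two summands in the definition of $\hTag$ transforms, using the product formula and the transformation rules for the covolume $D(\Lambda)$ already established in the lemma above. First I would address independence of the field $K$ (Part 1). Let $L/K$ be a finite extension; by the Extension Formula, every place $w\in M_K$ splits into places $v\mid w$ of $L$ with $\sum_{v\mid w} n_v = [L_w\cdot? ]$, more precisely $\sum_{v\mid w}[L_v:F_w] = [L:K]\cdot n_w$ after matching normalizations. For a finite place, $h_G^v(\phi)$ computed over $L$ relates to $h_G^w(\phi)$ over $K$ by exactly this local-degree bookkeeping, since $|g_i|_v = |g_i|_w$ for $v\mid w$ (the absolute values are normalized to be compatible on $F$, hence on $K$); summing over $v\mid w$ and dividing by $[L:F] = [L:K][K:F]$ recovers $\hTag^f$ computed over $K$. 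For an infinite place $w$, the associated lattice $\Lambda_v$ for $v\mid w$ is obtained from an embedding $L\hookrightarrow\Cinf$ extending the one defining $\Lambda_w$; since $\phi$ is defined over $K$, these lattices are all $\Cinf$-homothetic to $\Lambda_w$ — in fact, after choosing the reduced representative they are literally equal as lattices up to the homothety absorbed in the reduction, so $\log D(\Lambda_v)^{1/r}$ is constant over $v\mid w$, and again the local-degree sum collapses correctly. Hence $\hTag^\infty$, and therefore $\hTag$, is unchanged; the same computation done summand-by-summand gives the statement for $\hTag^f$ and $\hTag^\infty$ separately.

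Next, for Part 2, let $u\in\bar K^\times$ and consider the isomorphic Drinfeld module $\phi^{(u)}$ with coefficients $g_k^{(u)} = u^{1-q^k}g_k$ (the standard twist, so that $\phi^{(u)}_t(X) = u\phi_t(u^{-1}X)$), defined over $K' = K(u)$. By Part 1 we may compute $\hTag$ of both modules over $K'$. For a finite place $v\in M_{K'}^f$ we have $|g_k^{(u)}|_v^{1/(q^k-1)} = |u|_v^{-1}|g_k|_v^{1/(q^k-1)}$, so $h_G^v(\phi^{(u)}) = h_G^v(\phi) - n_v\log|u|_v$; summing over all finite $v$ contributes $-\frac{1}{[K':F]}\sum_{v\in M_{K'}^f} n_v\log|u|_v$ to $\hTag^f(\phi^{(u)}) - \hTag^f(\phi)$. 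For an infinite place $v$ with embedding $\sigma$, the lattice of $\phi^{(u)}$ is $\sigma(u)^{-1}$ times a lattice homothetic to $\Lambda_v$ — concretely $\Lambda_v(\phi^{(u)}) = \sigma(u)^{-1}\Lambda_v(\phi)$ before reduction — so by Part 2 of the covolume lemma $D(\Lambda_v(\phi^{(u)})) = |\sigma(u)|_v^{-r}D(\Lambda_v(\phi))$, and thus $-n_v\log D(\Lambda_v(\phi^{(u)}))^{1/r} = -n_v\log D(\Lambda_v(\phi))^{1/r} + n_v\log|u|_v$. Summing over infinite $v$ contributes $+\frac{1}{[K':F]}\sum_{v\in M_{K'}^\infty} n_v\log|u|_v$. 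Adding the finite and infinite contributions and invoking the Product Formula $\sum_{v\in M_{K'}} n_v\log|u|_v = 0$, the two terms cancel exactly, giving $\hTag(\phi^{(u)}) = \hTag(\phi)$. Since any $\bar K$-isomorphism between Drinfeld modules is of this twist form (over a common field containing the relevant $u$), this proves invariance.

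The main obstacle I anticipate is not the arithmetic of the two cancellations — those are forced by the Product Formula and the homogeneity of the covolume — but rather being careful about the reduction step at infinite places: the definition of $\hTag^\infty$ implicitly uses the lattice $\Lambda_v$ attached to $\phi^\sigma$ itself, not to its reduced representative, and one must check that replacing $\phi^\sigma$ by a $\Cinf$-homothetic reduced lattice $c\Lambda_v$ changes $D$ by $|c|^r$, which is precisely accounted for by the homogeneity in Part 2 of the covolume lemma — so in fact $\log D(\Lambda_v)^{1/r}$ may be computed from any homothety representative and the twist bookkeeping above goes through verbatim. A secondary point requiring a line of care is confirming that a twist by $u\in\bar K$ does not disturb the everywhere-stable-reduction hypothesis in a way that invalidates the formula; but since $h_G^v(\phi^{(u)}) = h_G^v(\phi) - n_v\log|u|_v$ and $\log|u|_v\in\frac{1}{e_v}\BZ$, one simply passes to a finite extension over which all these quantities are integral, which is harmless by Part 1. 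Everything else is routine place-by-place summation.
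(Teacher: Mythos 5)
Your strategy coincides with the paper's: Part~1 by local--degree bookkeeping over a finite extension $L/K$ (using $|g_i|_{v}=|g_i|_{w}$ and $D(\Lambda_{v_1})=D(\Lambda_{v_2})$ for places above a common $w$), and Part~2 by computing how the finite and infinite local terms transform under a twist and cancelling via the product formula. Part~1 is fine. But Part~2 as written contains a sign error that actually destroys the cancellation you invoke. For your twist $\phi^{(u)}_t(X)=u\phi_t(u^{-1}X)$, with coefficients $g_k^{(u)}=u^{1-q^k}g_k$, the associated lattice is $\sigma(u)\Lambda_v(\phi)$, \emph{not} $\sigma(u)^{-1}\Lambda_v(\phi)$: if $e_\Lambda$ is the exponential of $\Lambda$, then $e_{c\Lambda}(z)=c\,e_\Lambda(c^{-1}z)$, whose Drinfeld module is $c\phi c^{-1}$, i.e.\ scaling the lattice by $c$ multiplies $g_k$ by $c^{1-q^k}$. (The paper's Lemma~\ref{inv} uses exactly this convention: $\phi'=c^{-1}\phi c$, with $g'_k=c^{q^k-1}g_k$, has lattice $c^{-1}\Lambda$.) Consequently $D(\Lambda_v(\phi^{(u)}))=|u|_v^{\,r}D(\Lambda_v(\phi))$, and the infinite local term changes by $-n_v\log|u|_v$ --- the \emph{same} sign as your finite contribution, not the opposite one.

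This matters because with the signs you wrote, namely $-\sum_{v\in M_{K'}^f}n_v\log|u|_v$ from the finite places and $+\sum_{v\in M_{K'}^\infty}n_v\log|u|_v$ from the infinite ones, the product formula gives
\[
-\sum_{v\in M_{K'}^f}n_v\log|u|_v+\sum_{v\in M_{K'}^\infty}n_v\log|u|_v \;=\; 2\sum_{v\in M_{K'}^\infty}n_v\log|u|_v,
\]
which is not zero in general; two quantities of opposite sign cannot be killed by a relation saying their \emph{sum with equal signs} vanishes. The cancellation works precisely because both contributions equal $-n_v\log|u|_v$ and the sum then runs over \emph{all} places. Once you correct the direction of the homothety, your computation becomes identical to the paper's. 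Your closing remarks (that $D$ may be computed from any homothety representative by homogeneity, and that stable reduction is preserved after a harmless finite extension) are sound and not the issue.
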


\begin{proof}
Suppose that $L/K$ is a finite extension. Suppose $v_1,v_2\in M_L$ lie above the same place of $K$. Then $|g_i|_{v_1} = |g_i|_{v_2}$ for each $i$ and also $D(\Lambda_{v_1}) = D(\Lambda_{v_2})$. The first point follows as usual from $\displaystyle{[L:K] = \sum_{v|w}[L_v:K_w]}$.

To prove the second point, let $c\in\bar{K}^*$ and replace $K$ by $K(c)$, which we may by the first point. Now
\[
\hTag^f(c^{-1}\phi c) = \hTag^f(\phi) + \frac{1}{[K:F]}\sum_{v\in M_K^f}n_v \log|c|_v,
\]
and
\begin{eqnarray*}
\hTag^\infty(c^{-1}\phi c) & = & -\frac{1}{[K:F]} \sum_{v\in M_K^\infty} n_v\log D(c^{-1}\Lambda_v)^{1/r} \\
& = & -\frac{1}{[K:F]} \sum_{v\in M_K^\infty} n_v\log [|c|^{-r} D(\Lambda_v)]^{1/r} \\
& = & \hTag^\infty(\phi) + \frac{1}{[K:F]}\sum_{v\in M_K^\infty}n_v \log|c|_v.
\end{eqnarray*}
The result now follows from the product formula $\displaystyle{\sum_{v\in M_K} n_v\log|c|_v = 0}$.
\end{proof}

The advantage of $\hTag$ is that it behaves well under isogenies.

\begin{lem}[Taguchi's Isogeny Lemma]\label{TaguchiIsog}
Let $f : \phi \to \phi'$ be a $\bar{K}$-isogeny between two rank $r$ Drinfeld modules over $K$ with everywhere stable reduction. Then 
\[
-\frac{1}{r}\log\deg\hat{f} \; \leq \; \hTag(\phi')-\hTag(\phi) \;\leq\; \frac{1}{r}\log\deg f.
\]
\end{lem}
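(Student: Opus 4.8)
The plan is to reduce the statement over $K$ to the analytic statement over $\Cinf$, place by place at infinity, then combine with the equality of finite parts. First I would invoke Lemma~\ref{TagHeightInvariance} to enlarge $K$ freely; in particular I may assume $f$ and its dual $\hat f$ are defined over $K$, that both $\phi$ and $\phi'$ have everywhere stable reduction over $K$, and that every infinite place of $K$ is such that the associated lattices $\Lambda_v$, $\Lambda'_v$ of $\phi^\sigma$, $\phi'^\sigma$ are \emph{reduced} (possible since each Drinfeld module over $\Cinf$ is isomorphic over $\Cinf$ to a reduced one, and by Lemma~\ref{TagHeightInvariance} such an isomorphism does not change $\hTag$). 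Since $\phi,\phi'$ have everywhere stable reduction, the finite parts satisfy $\hTag^f(\phi')-\hTag^f(\phi) = h_G^f(\phi')-h_G^f(\phi)$, and the definition of $\hTag$ gives
\[
\hTag(\phi')-\hTag(\phi) \;=\; \big(h_G^f(\phi')-h_G^f(\phi)\big) \;-\;\frac{1}{[K:F]}\sum_{v\in M_K^\infty} n_v\Big(\log D(\Lambda'_v)^{1/r}-\log D(\Lambda_v)^{1/r}\Big).
\]

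Next I would treat the finite part. For a finite place $v$ the isogeny $f$ is a nonzero additive polynomial with coefficients in $K$ (after the above extension, in $\CO_v$ at least when $\phi,\phi'$ have good models, and in general one argues with the $v$-adic Newton polygon), and the relation $\hat f\circ f=\phi_N$ with $\deg N = \tfrac1r(\log\deg f+\log\deg\hat f)$ controls the $v$-adic sizes of the coefficients $g_i, g_i'$. Concretely, $h_G^v$ is Taguchi's local invariant $v(\phi)$ of \cite[\S2]{Tag}, and Taguchi proves exactly the local estimate $|v(\phi')-v(\phi)|\le \tfrac1r\log\deg f$ type bound there; summing $n_v$-weighted over $v\in M_K^f$ and dividing by $[K:F]$ yields the finite contribution. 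Alternatively one reduces the finite local statement to the same lattice-index computation by passing to the completion $K_v$ and the associated formal/analytic uniformization, but citing Taguchi is cleanest since the finite part of $\hTag$ is literally his $v(\phi)$.

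For the infinite part, each $v\in M_K^\infty$ comes with an embedding $\sigma:K\hookrightarrow\Cinf$, and the isogeny $f$ induces an isogeny $f^\sigma:\phi^\sigma\to\phi'^\sigma$ of reduced Drinfeld modules over $\Cinf$ of the same degree $\deg f$ (and $\widehat{f^\sigma}$ of degree $\deg\hat f$). Lemma~\ref{IsogAnalytic} then gives
\[
-\log\deg\hat f \;\le\; \log D(\Lambda_v)-\log D(\Lambda'_v)\;\le\;\log\deg f,
\]
hence $-\tfrac1r\log\deg\hat f \le \log D(\Lambda'_v)^{1/r}-\log D(\Lambda_v)^{1/r}\le \tfrac1r\log\deg f$, uniformly in $v$. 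Weighting by $n_v/[K:F]$ and using the extension formula $\sum_{v\mid\infty}n_v=[K:F]$, the infinite contribution $-\tfrac1{[K:F]}\sum_{v\mid\infty}n_v(\log D(\Lambda'_v)^{1/r}-\log D(\Lambda_v)^{1/r})$ lies in $[-\tfrac1r\log\deg f,\ \tfrac1r\log\deg\hat f]$. Combining the two parts and using $\log\deg\hat f\le (r-1)\log\deg f$ together with $\deg N=\tfrac1r(\log\deg f+\log\deg\hat f)$, one gets the asserted two-sided bound $-\tfrac1r\log\deg\hat f\le \hTag(\phi')-\hTag(\phi)\le\tfrac1r\log\deg f$ (the upper bound from the finite part's upper bound plus the infinite part's upper bound; the lower bound symmetrically, or simply by applying the upper bound to $\hat f:\phi'\to\phi$).

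The main obstacle I expect is the finite-place bookkeeping: making sure that after the field extension the isogeny really is defined by an $\CO_v$-integral additive polynomial for every $v$ (this uses everywhere stable reduction and possibly a further extension), and that the local inequality for $h_G^v=v(\phi)$ is applied with the correct normalization — recall the remark in Section~\ref{sec:heights} that \cite{Tag} uses a different normalization of absolute values (with $e_v$ absorbed, so that the product formula uses $f_v$ rather than $n_v$). One must check that the extra factor of $e_v$ cancels consistently between the $n_v$-weighting here and Taguchi's $f_v$-weighting, so that the clean constant $\tfrac1r$ survives. Everything else is a formal assembly of Lemma~\ref{IsogAnalytic}, the definition of $\hTag$, and the product/extension formulas. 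Symmetry via the dual isogeny $\hat f$ lets us prove only the upper bound and deduce the lower one, which also shortens the write-up.
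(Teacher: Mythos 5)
Your assembly of the two parts does not yield the stated bound. You bound the finite contribution by $\tfrac1r\log\deg f$ and the infinite contribution by $\tfrac1r\log\deg\hat f$ (via Lemma~\ref{IsogAnalytic}); adding these gives
\[
\hTag(\phi')-\hTag(\phi)\;\leq\;\tfrac1r\big(\log\deg f+\log\deg\hat f\big)\;=\;\deg N,
\]
which is weaker than the claimed $\tfrac1r\log\deg f$ (indeed $\deg N$ can be as large as $\log\deg f$ itself). The sharp constant cannot be obtained by bounding the finite and infinite parts separately, because each part can individually move in either direction by an amount of order $\log\deg f$; what makes the lemma true is a cancellation between them. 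The paper's proof sidesteps this entirely by quoting Taguchi's exact global formula (Lemma 5.5 of \cite{Tag}),
\[
\hTag(\phi')-\hTag(\phi)=\tfrac1r\log\deg f-\tfrac{1}{[K:F]}\log\#(R/D_f),
\]
where $D_f$ is the different of $f$: the whole content is that the correction term has a definite sign, and the lower bound then follows by applying this to $\hat f$. Your outline never produces such a signed error term, so the key idea is missing. (Relatedly, I do not believe \cite[\S2]{Tag} contains a per-place estimate $|v(\phi')-v(\phi)|\leq\tfrac1r\log\deg f$ ready to be cited; \S2 only defines $v(\phi)$, and the isogeny estimates in \cite{Tag} are global, in \S5.)

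A secondary problem is your opening normalization. Replacing $\phi$ by a globally isomorphic $c^{-1}\phi c$ rescales \emph{all} the lattices $\Lambda_v$ simultaneously by the various images of $c$, so you cannot in general arrange that every $\Lambda_v$ and every $\Lambda'_v$ is reduced at once; moreover any such global twist also shifts $h_G^f$, threatening everywhere stable reduction. Only the combination in Lemma~\ref{inv} is invariant under the per-embedding normalization, which is why the paper performs Reduction~\ref{Assumption} only inside the sum (\ref{BC}) in the proof of Theorem~\ref{main}, not at the level of $\hTag$. If you want an argument independent of Taguchi's Lemma 5.5, you would need to work with the exact local contributions (the different) rather than two one-sided bounds.
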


\begin{proof}

We start with Lemma 5.5 in \cite{Tag}, it states that 
\[
\hTag(\phi') - \hTag(\phi) = \frac{1}{r}\log\deg(f) - \frac{1}{[K:F]}\log\#(R/D_f).
\]
Here, $R$ is the integral closure of $A$ in $F$ and the ideal $D_f \subset R$ is the {\em different} of $f$. We don't need the exact definition of this, merely the fact that $\#(R/D_f)$ is a positive integer, so $\log\#(R/D_f)\geq 0$. This gives us the upper bound, and the lower bound is obtained by applying the upper bound to the dual isogeny $\hat{f} : \phi' \to \phi$.
\end{proof}

\section{Analytic estimates}\label{sec:analytic}

The proof of Theorem \ref{main} involves breaking up the difference in heights, using the identity $\hTag^f(\phi) = h_G^f(\phi)$, as follows.
\begin{equation}\label{MainIdentity}
h_G(\phi')-h_G(\phi) = 
\underbrace{\big[\hTag(\phi')-\hTag(\phi)\big]}_{(A)} 
+ \underbrace{\big[h_G^\infty(\phi') - h_G^\infty(\phi)\big]}_{(B)}
+ \underbrace{\big[\hTag^\infty(\phi) - \hTag^\infty(\phi')\big]}_{(C)}.
\end{equation}

Part (A) is bounded using Taguchi's Isogeny Lemma \ref{TaguchiIsog}. 

Bounding the terms (B) and (C) will require some analytic estimates, which we outline next.

\subsection{Proof of Theorem \ref{main}, Part 1.}

We start by \cite[Lemme 2.10]{DD}: we may replace $K$ by a finite extension so that $\phi$ and $\phi'$ have everywhere stable reduction. From now on, our Drinfeld modules are all assumed to have everywhere stable reduction.

\begin{lem}\label{inv}
Let $\phi$ be a Drinfeld module of rank $r$ over $\Cinf$ with associated lattice $\Lambda$. Then the quantity 
\[
\log\max\{ |g_1|^{1/(q-1)}, |g_2|^{(1/(q^2-1)}, \ldots, |g_r|^{1/(q^r-1)}\} + \log D(\Lambda)^{1/r} \in\BR
\]
is invariant under isomorphisms of $\phi$.
\end{lem}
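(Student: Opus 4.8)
The plan is to show that under an isomorphism $\psi : \phi \to \phi'$, given by multiplication by some $c \in \Cinf^*$, both summands transform by an explicit power of $|c|$ that cancels. First I would recall that an isomorphism of Drinfeld modules over $\Cinf$ corresponds analytically to multiplication by a scalar $c$ relating the associated lattices by $\Lambda' = c\Lambda$, and on the level of coefficients one has $g_k' = c^{1-q^k} g_k$ (this follows from writing $\psi_t = c^{-1}\phi_t c$ and comparing coefficients of $X^{q^k}$, since conjugation by $c$ sends $X^{q^k}$ to $c^{1-q^k}X^{q^k}$ up to the normalization).

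Granting this, the computation is immediate. For each $k$ we get
\begin{equation*}
|g_k'|^{1/(q^k-1)} = |c|^{(1-q^k)/(q^k-1)} |g_k|^{1/(q^k-1)} = |c|^{-1} |g_k|^{1/(q^k-1)},
\end{equation*}
so $\log\max_k |g_k'|^{1/(q^k-1)} = \log\max_k |g_k|^{1/(q^k-1)} - \log|c|$. On the other side, Part 2 of the covolume lemma gives $D(\Lambda') = D(c\Lambda) = |c|^r D(\Lambda)$, hence $\log D(\Lambda')^{1/r} = \log D(\Lambda)^{1/r} + \log|c|$. Adding the two displays, the $\pm\log|c|$ terms cancel and the sum is unchanged. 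That the expression lies in $\BR$ (rather than some extension) is clear since each $|g_k|$ and $D(\Lambda)$ are real, and the exponents are real.

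The only genuine point requiring care — the main (minor) obstacle — is pinning down the exact exponent in the transformation law $g_k' = c^{1-q^k}g_k$ and making sure it is consistent with the normalization of the weighted projective space $W\BP = \BP(q-1,q^2-1,\ldots,q^r-1)$ used to define $h_G$; indeed this is precisely why the weights $q^k-1$ appear, so that $|g_k|^{1/(q^k-1)}$ rescales uniformly by $|c|^{-1}$ and $h_G$ descends to a well-defined height on $W\BP$. Everything else is a direct substitution using the two lemmas already proved. I would present the proof in three short steps: (i) state the analytic/coefficient transformation under isomorphism; (ii) compute the change in each of the two terms; (iii) observe the cancellation and conclude the sum is an isomorphism invariant real number.
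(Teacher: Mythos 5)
Your proof is correct and is essentially the paper's own argument: a direct computation showing that the coefficient term shifts by $-\log|c|$ and the covolume term by $+\log|c|$, so the sum is unchanged. The only (immaterial) difference is your convention $\Lambda'=c\Lambda$, $g_k'=c^{1-q^k}g_k$ versus the paper's $\phi'=c^{-1}\phi c$, $g_k'=c^{q^k-1}g_k$, $\Lambda'=c^{-1}\Lambda$; both are internally consistent.
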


\begin{proof}
Let $\phi' = c^{-1}\phi c$ with $c\in\Cinf^*$ be another Drinfeld module isomorphic to $\phi$. Then
\begin{eqnarray*}
\lefteqn{\log\max\{ |g'_1|^{1/(q-1)}, |g'_2|^{(1/(q^2-1)}, \ldots, |g'_r|^{1/(q^r-1)}\} + \log D(\Lambda')^{1/r} } \\
& = & \log\max\{ |c^{q-1}g_1|^{1/(q-1)}, |c^{q^2-1}g_2|^{(1/(q^2-1)}, \ldots, |c^{q^r-1}g_r|^{1/(q^r-1)}\} + \log D(c^{-1}\Lambda)^{1/r} \\
& = & \log\max\{ |g_1|^{1/(q-1)}, |g_2|^{1/(q^2-1)}, \ldots, |g_r|^{1/(q^r-1)}\}+ \log D(\Lambda)^{1/r}. \\
\end{eqnarray*}
\end{proof}
Let $\phi/K$ be a rank $r$ Drinfeld module with coefficients $g_1,\ldots,g_r\in K$.
To each infinite place $v\in M_K^\infty$ we associate an embedding $\sigma : K \into \Cinf$ for which $|x|_v=|x^\sigma|$ for any $x\in K$.

Then $\Lambda_v \subset\Cinf$ is the lattice associated to the Drinfeld module $\phi^\sigma/\Cinf$ defined by the coefficients $g_1^\sigma,\ldots,g_r^\sigma \in \Cinf$. 

We rewrite (B) + (C) of (\ref{MainIdentity}) as follows.
\begin{eqnarray}
\label{BCpart}
(B)+(C)  
& = & \frac{1}{[K:F]}\sum_{\sigma : K \into \Cinf}n_\sigma\left(\left[
\log\max_{1\leq i \leq r} |g'^\sigma_i|^{1/(q^i-1)} - \log\max_{1\leq i \leq r} |g_i^\sigma|^{1/(q^i-1)}
\right]\right. \label{BC}\\
& & + \left.\left[
\frac{1}{r}\log D(\Lambda'_\sigma) - \frac{1}{r}\log D(\Lambda_\sigma)
\right] \right) \nonumber.
\end{eqnarray}

By Lemma \ref{inv}, the term for each $\sigma : K \into \Cinf$ in (\ref{BC}) depends only on the isomorphism classes of $\phi^\sigma$ and $\phi'^\sigma$. 
Therefore, in the remainder of this section, we will frequently make the following reduction:

\begin{red}\label{Assumption}
{\it 
Whenever the Drinfeld module $\phi^\sigma$ arises in the context of (\ref{BC}), we replace it by an isomorphic reduced Drinfeld module, which we may by Lemma \ref{inv}, and which by abuse of notation we again denote by $\phi^\sigma$.
}
\end{red}

%
%

Under Reduction \ref{Assumption}, (C) is bounded by Lemma \ref{IsogAnalytic}:

\begin{equation}\label{BoundC}
\hTag^\infty(\phi) - \hTag^\infty(\phi') \leq \frac{1}{[K:F]}\sum_{v\in M_K^\infty}n_v\frac{1}{r}\log\deg\hat{f} = \frac{1}{r}\log\deg\hat{f}.
\end{equation}

Next, we obtain an absolute bound on part (B).

\begin{lem}\label{AbsBound}
Let $\phi/\Cinf$ be a reduced Drinfeld module of rank $r$. Then
\begin{equation}\label{eqAbsBound}
\frac{q^r}{q^r-1} \;\leq\; \log\max_{1\leq i \leq r}|g_i|^{1/(q^i-1)}    \;\leq\; \frac{q}{q-1}.
\end{equation}
\end{lem}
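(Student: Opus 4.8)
The plan is to use the structure theory of reduced lattices together with the standard formulas expressing the coefficients $g_k$ of a Drinfeld module as sums over the lattice. Recall that for a rank $r$ lattice $\Lambda$ with associated Drinfeld module $\phi$, the exponential function $e_\Lambda(X) = X\prod_{0\neq\lambda\in\Lambda}(1-X/\lambda)$ satisfies $e_\Lambda(tX) = \phi_t(e_\Lambda(X))$, and comparing coefficients gives recursive formulas for the $g_k$ in terms of the Eisenstein-type series $E_k(\Lambda) = \sum_{0\neq\lambda\in\Lambda}\lambda^{-(q^k-1)}$ and related sums. In the non-archimedean setting these are much cleaner than over $\BC$: because $\Lambda$ is reduced, every nonzero $\lambda$ has $|\lambda|\geq 1$, so $|E_k(\Lambda)|\leq 1$, and in fact the dominant contribution comes from the (finitely many, by discreteness) shortest vectors of $\Lambda$. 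The first step is therefore to pin down, via Gekeler's work (\cite{Gek1} or \cite{GekCrelle}), the precise non-archimedean valuations $|g_k|$ in terms of the successive minima $|\omega_1|\geq\cdots\geq|\omega_r| = 1$ of the reduced lattice; I expect a clean formula of the shape $|g_r| = |D(\Lambda)|^{-1}\cdot(\text{unit})$ or more precisely relating $\log|g_k|$ to partial products of the $|\omega_i|$.

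With such formulas in hand, the upper bound $\log\max_i |g_i|^{1/(q^i-1)}\leq q/(q-1)$ should follow by estimating each $|g_k|$ from above using $|\lambda|\geq 1$ for all nonzero $\lambda\in\Lambda$, combined with the ultrametric inequality applied to the recursive definition $g_k = \sum (\text{products of } E_j\text{'s})$; the exponent $q/(q-1) = \sum_{i\geq 1} q^{-(i-1)}\cdot\frac{q}{?}$ — actually $\frac{q}{q-1} = \frac{1}{1-1/q}$ — strongly suggests a geometric series, i.e. the bound on $\log|g_k|^{1/(q^k-1)}$ telescopes/sums geometrically as $k$ ranges, with the worst case being $k=1$ contributing the main term and higher $k$ contributing the tail $q^{-1} + q^{-2}+\cdots$. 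For the lower bound $q^r/(q^r-1)\leq \log\max_i|g_i|^{1/(q^i-1)}$, the natural move is to focus on $i=r$: one shows $|g_r|^{1/(q^r-1)}\geq q^{q^r/(q^r-1)}$, i.e. $\log|g_r|\geq \frac{q^r}{q^r-1}\cdot$ wait, the logarithm is base $q$, so this says $|g_r|\geq q^{\,q^r/(q^r-1)}$. Since $\Lambda$ is reduced with $|\omega_r| = 1$ and $D(\Lambda) = |\omega_1|\cdots|\omega_{r-1}|\geq 1$, and $g_r$ is essentially $D(\Lambda)^{-1}$ times a product of factors each of absolute value $\geq 1$ coming from a partial fraction / logarithmic derivative computation — here I need the sign to work out so that $|g_r|$ is in fact large, not small. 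The key identity will be Gekeler's $|g_r| = q^{(q^r-q^{r-1})/(q^r-1)}\cdot\prod|\omega_i|^{?}$ or similar; I'd extract the exact normalization from \cite[Prop.~3.x]{GekCrelle}.

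Concretely, the steps in order: (1) write down $\phi_t(e_\Lambda(X)) = e_\Lambda(tX)$ and derive the recursion $g_k = \frac{1}{\ell_k}(t^{q^k}-t)\cdot(\text{something})$ where $\ell_k$ is the leading coefficient of $e_\Lambda$ truncated — more usefully, use the closed form for the $\BF_q$-linear $e_\Lambda$ and the product expansion to get $g_r = (t^{q^r}-t)/D_r$ where $D_r$ relates to $\prod_{0\neq\lambda}\lambda$ over a fundamental set, ultimately $|g_r|$ in terms of $D(\Lambda)$; (2) by reducedness, bound $|\lambda|\geq 1$ to control all the intermediate sums ultrametrically, yielding the upper bound on each $|g_k|^{1/(q^k-1)}$; (3) isolate the $k=r$ term and use $D(\Lambda)\geq 1$ to get the lower bound on $|g_r|^{1/(q^r-1)}$, hence on the max. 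The main obstacle I anticipate is getting the exact non-archimedean normalization constants right — the factors like $q^{q^i/(q^i-1)}$ arising from $|t^{q^i}-t|_\infty = q^{q^i}$ and the covolume contributions — and making sure the reducedness hypothesis is used in exactly the right place (namely to guarantee both $1\in\Lambda$, forcing $|g_i|$ not too large, and $|\omega_r|=1$, which together with $D(\Lambda)\geq 1$ forces $|g_r|$ not too small). Tracking these constants carefully, rather than any conceptual difficulty, is where the real work lies; I would lean heavily on Gekeler's explicit computations in \cite{Gek1, GekCrelle} and simply cite the valuation formulas, then do the two geometric-series estimates.
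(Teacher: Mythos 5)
Your route for the upper bound is viable and in fact more elementary than the paper's: writing $e_\Lambda(X)=\sum_k\alpha_kX^{q^k}$, reducedness gives $|\lambda|\geq 1$ for all nonzero $\lambda\in\Lambda$, hence $|\alpha_k|\leq 1$, and the recursion $g_k=\alpha_k(t^{q^k}-t)-\sum_{i=1}^{k-1}g_i\alpha_{k-i}^{q^i}$ together with $|t^{q^k}-t|=q^{q^k}$ yields $\log|g_k|\leq q^k$ by induction, so $\log|g_k|^{1/(q^k-1)}\leq q^k/(q^k-1)\leq q/(q-1)$. (The paper instead gets $\log|g_i|\leq q^i$ from Gekeler's monotonicity of $\log|g_i|$ on the image of the fundamental domain in the Bruhat--Tits building; also note the ``geometric series'' you anticipate is not really there --- the two constants are just $\max_i q^i/(q^i-1)$ and $\min_i q^i/(q^i-1)$.)

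The lower bound strategy, however, has a genuine gap, and it is exactly at the point you flag yourself (``I need the sign to work out so that $|g_r|$ is in fact large''). It does not work out: the plan to isolate $i=r$ and prove $|g_r|\geq q^{q^r}$ rests on a false claim. For a reduced lattice, $|g_r|$ equals $q^{q^r}$ only when all successive minima satisfy $|\omega_1|=\cdots=|\omega_r|=1$; as the lattice degenerates towards lower rank (i.e.\ $D(\Lambda)\to\infty$), $|g_r|$ tends to $0$, not infinity --- in rank $2$, $g_2=\Delta$ is a cusp form, and Gekeler's formulas give $\log|\Delta(\omega)|=(q+1)q-q^{k+1}\to-\infty$ for $|\omega|_i=q^k$, $k\to\infty$, while $\log|g_1(\omega)|=q$ stays put. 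So the index achieving the maximum must be allowed to depend on the lattice: the correct statement is that $\log|g_i|=q^i$ for $i$ equal to the number of successive minima of absolute value $1$ (equivalently, according to which walls $\CF_{r-1},\ldots,\CF_1$ of the fundamental domain the point lies on), and since $q^i/(q^i-1)\geq q^r/(q^r-1)$ for every $i\leq r$, the lower bound follows. Establishing that equality $\log|g_i|=q^i$ on the appropriate stratum is the real content of the paper's proof and requires Gekeler's analysis of the functions $\log|g_i|$ on the building (\cite[Prop.~4.10, Cor.~4.11, Cor.~4.16]{Gek1}); it is not something the crude ultrametric estimate $|\lambda|\geq 1$ can deliver, because that estimate only bounds things from above.
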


\begin{proof}
For this we must recall some concepts introduced in \cite{Gek1}. Define
\begin{eqnarray*}
\CF & := & \Big\{ (\omega_1,\omega_2,\ldots,\omega_r) \in \Cinf^r \;|\; \text{$\omega_r=1$ and $(\omega_1,\omega_2,\ldots,\omega_r)$ forms }\\
& & \text{\quad a successive minimum basis for the lattice $\omega_1A + \omega_2A + \cdots + \omega_rA$} \Big\}.
\end{eqnarray*}
This set is a fundamental domain (in a suitable sense) for the action of $\GL_r(A)$ on the Drinfeld period domain 
\[
\Omega^r =  \Big\{ (\omega_1,\omega_2,\ldots,\omega_r) \in \Cinf^r \;|\; \text{$\omega_r=1$ and $\omega_1,\omega_2,\ldots,\omega_r$ are linearly independent over $\Finf$} \Big\}.
\]
Every reduced Drinfeld module $\phi/\Cinf$ corresponds to a reduced lattice of the form $\Lambda = \omega_1A + \omega_2A + \cdots + \omega_rA$ for some $(\omega_1,\omega_2,\ldots,\omega_r)\in\CF$.

Denote by $\BT$ the Bruhat-Tits building of $\PGL_r(\Finf)$ and by $\BT(\BQ)$ the points in the realisation of $\BT$ with rational barycentric coordinates. 
The image of $\CF$ under the building map $\lambda : \Omega^r \to \BT(\BQ)$ (see \cite[\S2.3]{Gek1}) is an $(r-1)$-dimensional simplicial complex $W$, whose vertices correspond to integer $r$-tuples $(k_1,k_2,\ldots,k_r)\in\BZ_{\geq 0}^r$ with $k_1\geq k_2 \geq \cdots \geq k_r=0$. The preimage of such a vertex consists of lattice bases $(\omega_1,\omega_2,\ldots,\omega_r)$ satisfying $|\omega_i|=q^{k_i}$ for each $i$.

The origin of $W$ is denoted $\mathbf{o}=(0,0,\ldots,0)$. By \cite[\S4.6]{Gek1}, for $\omega\in\lambda^{-1}(\mathbf{o})$ we have $\log|g_r(\omega)|=q^r$ and for each $i=1,2,\ldots,r-1$, $\log|g_i(\omega)|\leq q^i$, with equality achieved somewhere on the set $\lambda^{-1}(\mathbf{o})$ by \cite[Cor. 4.16]{Gek1}. 

By \cite[Cor. 4.11 and 4.16]{Gek1} it follows that each $\log|g_i(\omega)|$ is non-increasing as $\lambda(\omega)$ moves away from $\mathbf{o}$ in $W(\BQ)$, and so, for every $i=1,2,\ldots,r$, 
\begin{equation}\label{gUpperBound}
\log |g_i(\omega)| \leq q^i \qquad\text{for all $\omega\in\CF$.}
\end{equation}
This implies the upper bound in (\ref{eqAbsBound}).

For each $i=1,2,\ldots,r-1$ we define the $i$th wall of $W$ to be the subcomplex $W_i\subset W$ spanned by vertices satisfying $k_i=k_{i+1}$. Its preimage under the building map is denoted 
\[
\CF_i = \lambda^{-1} \big(W_i(\BQ)\big) = \{(\omega_1,\omega_2,\ldots,\omega_r)\in\CF \;|\; |\omega_i|=|\omega_{i+1}|\}. 
\]

To prove the lower bound, first note that by \cite[Cor. 4.16]{Gek1}, 
\begin{equation}\label{g1LowerBound}
\log |g_1(\omega)|=q \qquad\text{for all } \omega\in\CF\smallsetminus \CF_{r-1}
\end{equation}
and we claim that, for $i=2,3,\ldots,r-1$, 
\begin{equation}\label{gLowerBound}
\log |g_i(\omega)| = q^i \qquad\text{for all } \omega\in \big(\CF_{r-1}\cap \CF_{r-2} \cap \cdots \cap \CF_{r-i+1}\big) \smallsetminus \CF_{r-i}.
\end{equation}

Indeed, by \cite[Cor. 4.16]{Gek1}, since $\omega\notin\CF_{r-i}$, $\log|g_i(\omega)|$ is constant on the fibres of $\lambda$, we may consider $\log|g_i|$ as a function on $W(\BQ)$. Since $1=|\omega_r|=|\omega_{r-1}|=\cdots=|\omega_{r-i+1}|$, the point $\lambda(\omega)\in W(\BQ)$ lies in a simplex all of whose edges can be reached from the vertex $\mathbf{o}$ by paths consisting entirely of edges of the form $\mathbf{k}\to\mathbf{k}+\mathbf{k}_\ell$ for $\ell\leq r-i$ (here $\mathbf{k}_\ell = (1,1,\ldots,1,0,\ldots,0)$ contains $\ell$ ones). By \cite[Prop. 4.10 and Cor. 4.16]{Gek1}, $\log|g_i|$ is constant on these edges, and it interpolates linearly within each simplex, hence $\log|g_i(\omega)| = \log\|g_i(\lambda^{-1}(\mathbf{o}))\| = q^i$, by \cite[\S4.6]{Gek1}. This proves the claim.

Every $\omega\in\CF$ lies in one of the subsets $$\CF\smallsetminus\CF_{r-1},\; \CF_{r-1}\smallsetminus \CF_{r-1}, \; \big(\CF_{r-1}\cap\CF_{r-2}\big)\smallsetminus\CF_{r-3},\ldots,\; \big(\CF_{r-1}\cap\cdots\cap\CF_{1}\big)=\{\mathbf{o}\}.$$ Hence, by (\ref{g1LowerBound}), (\ref{gLowerBound}) and $\log|g_r(\mathbf{o})|=q^r$, we have $\log|g_i(\omega)|=q^i$ for some $i=1,2,\ldots,r$, and the lower bound in (\ref{eqAbsBound}) follows.
\end{proof}

In particular, we find that in (\ref{MainIdentity}), after Reduction \ref{Assumption},
\begin{equation}\label{BoundB}
\big|h_G^\infty(\phi') - h_G^\infty(\phi)\big| \leq \frac{1}{[K:F]}\sum_{v\in M_K^\infty}n_v \left(\frac{q}{q-1} - \frac{q^r}{q^r-1}\right) = \frac{q}{q-1} - \frac{q^r}{q^r-1}.
\end{equation}

Now Lemma \ref{TaguchiIsog} together with (\ref{BoundC}) and (\ref{BoundB}) and the fact that $\deg N = \frac{1}{r}\big(\log\deg f + \log\deg\hat{f}\big)$ imply Theorem \ref{main}, Part 1. \qed

\subsection{Proof of Theorem \ref{main}, Part 2.}

\begin{lem}\label{jgrowth}
Let $\phi/\Cinf$ be a reduced rank $2$ Drinfeld module with associated reduced lattice $\Lambda\subset\Cinf$. Then
\[
1\leq D(\Lambda) \leq \max\left\{\frac{1}{q}\log|j(\phi)|,1\right\}.
\]
\end{lem}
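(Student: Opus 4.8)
The plan is to work directly with a successive minimum basis $(\omega_1,\omega_2)$ of the reduced lattice $\Lambda$, so $|\omega_1|\geq|\omega_2|=1$ and $D(\Lambda)=|\omega_1|$. The lower bound $D(\Lambda)\geq 1$ is then immediate from $|\omega_1|\geq|\omega_2|=1$. For the upper bound, I would express $|j(\phi)|$ in terms of $|\omega_1|$ using Gekeler's analytic formulas for the coefficients $g_1,g_2$ of a rank $2$ Drinfeld module attached to the lattice $\Lambda = \omega_1 A + A$, and then read off the stated inequality.

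\textbf{Key steps.} First, normalize via Reduction-style scaling so that $\omega_2=1$ and set $z=\omega_1$, so that $z$ lies in the fundamental domain $\CF$ for $\GL_2(A)$ acting on $\Omega^2$; here $|z|=|\omega_1|=D(\Lambda)\geq 1$. Second, invoke Gekeler's expansions (from \cite{Gek0}, or \cite[\S4--5]{Gek1}): for $|z|$ large one has $\log|g_1(z)| = q$ and $\log|g_2(z)|$ grows linearly, more precisely $\log|g_2(z)| = q^2 - (q^2-q)(\log|z|)$ is \emph{not} quite right — rather the relevant clean statement is that $\log|\Delta|$ or $\log|g_2|$ decreases with slope controlled by $\log|z|$, giving $\log|j(z)| = (q+1)\log|g_1| - \log|g_2|$, and for $z\in\CF\smallsetminus\CF_1$ (i.e. $|z|>1$) this simplifies to $\log|j(z)| = q\log|z| + q$ using the normalized leading behaviour; actually the cleanest route is to cite the estimate $\log|j(z)| = q\cdot\log|z|$ valid for $\log|z|\geq 1$ and $\log|j(z)|\leq q$ for $\log|z|<1$, which is exactly \cite[\S4--5]{Gek1} specialized to rank $2$. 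Third, combine: if $\log|z|\geq 1$ then $D(\Lambda)=|z|=q^{\log|z|}\leq \frac{1}{q}\log|j(\phi)|$ is false dimensionally — so the correct reading must be $\log D(\Lambda)=\log|z|\leq\frac{1}{q}\log|j(\phi)|$, i.e. the statement should be interpreted with the outer $\log$, giving $D(\Lambda)\leq\max\{q^{\frac{1}{q}\log|j|},1\}$. Wait — re-reading the lemma, it literally says $D(\Lambda)\leq\max\{\frac1q\log|j(\phi)|,1\}$, so I would establish directly: when $D(\Lambda)=|z|\leq 1$ we are done since the max is $\geq 1$; when $|z|>1$, Gekeler's formula gives $|j(z)| = |z|^{q}\cdot(\text{unit})$ hence $\log|j(\phi)| = q\log|z|$, and since $\log|z|\geq \frac1{q}\cdot q\log|z|\cdot\frac1{\log|z|}$... — the honest content is $\log D(\Lambda) \leq \frac{1}{q}\log|j(\phi)|$, and one then uses $x\leq q^x$ trivially only if needed; most likely the paper's convention makes $D(\Lambda)$ itself appear and the inequality $|z|\leq\frac1q\log|j|$ follows because $\log|j| = q\log|z|+ (\text{bounded})$ forces, for $|z|$ large, $\frac1q\log|j|\approx\log|z|$, which dominates $|z|$ is again false. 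I will follow the paper's stated inequality verbatim and derive it from $\log|j(\phi)| = q\cdot D(\Lambda)$ — note $\log|z|$ versus $|z|$: since $z\in\CF$ with $|z|=q^{k}$, $k=\log|z|\in\BZ_{\geq 0}$, so $D(\Lambda)=q^k$ but the relevant quantity in \cite{Gek1} is $k$ itself; I expect the lemma uses $D(\Lambda)$ loosely for this combinatorial height, and $\log|j| = q k + q$ on $\CF\smallsetminus\CF_1$, giving $k = \frac{1}{q}\log|j| - 1 < \frac1q\log|j|$.

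\textbf{Main obstacle.} The delicate point is matching Gekeler's normalizations: his formulas for $g_i(\omega)$ in \cite{Gek1} carry specific constants (powers of the Carlitz period $\bar\pi$ or of $\ell_i$-type factors), and one must check that in the reduced normalization $\omega_r=1$ these constants contribute $0$ to the valuation, so that $\log|g_i(\omega)| = q^i$ at $\mathbf{o}$ exactly. The second subtlety is the precise linear behaviour of $\log|g_2|$ (equivalently $\log|\Delta|$) as $\lambda(\omega)$ moves along the edge $\mathbf{o}\to\mathbf{o}+\mathbf{k}_1$ away from the wall $\CF_1$: I would quote \cite[Prop. 4.10, Cor. 4.16]{Gek1} to get that $\log|g_2(\omega)| = q^2 - (q^2 - q^? )\cdot k$ with the correct slope, then form $\log|j| = (q+1)\log|g_1| - \log|g_2| = (q+1)q - \log|g_2|$ and solve. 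Once the slope is pinned down, the inequality $D(\Lambda)\leq\frac1q\log|j(\phi)|$ (for $|j|$ large) and $D(\Lambda)=1$ (on the wall, where $\log|j|\leq q$) drop out, and taking the maximum handles both regimes uniformly.
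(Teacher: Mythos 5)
Your setup (successive minimum basis $(\omega_1,1)$, $D(\Lambda)=|\omega_1|\geq 1$, lower bound immediate) matches the paper, but the upper bound in your sketch rests on a wrong analytic input: the growth rate of $\log|j|$. The estimate the paper actually uses is Gekeler's \cite[Thm.~2.17]{Gek0}: for $|\omega_1|=q^k$ with $k\in\BZ_{\geq 1}$ one has $\log|j(\phi)| = q^{k+1}$. That is, $\log|j|$ grows \emph{exponentially} in $k=\log D(\Lambda)$, hence \emph{linearly} in $D(\Lambda)=q^k$ itself: $\log|j(\phi)| = q^{k+1}=q\cdot q^k = q\,D(\Lambda)$. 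This is precisely why the lemma's inequality $D(\Lambda)\leq \frac1q\log|j(\phi)|$ is dimensionally consistent as written; all of your back-and-forth about a ``dimensional mismatch'' stems from your (incorrect) working assumption $\log|j(z)| = q\log|z|$, and your final resolution --- reinterpreting $D(\Lambda)$ as the combinatorial height $k$ and using $\log|j|=qk+q$ --- is false on both counts. ($D(\Lambda)$ really is $q^k$, and $\log|j|$ is not linear in $k$; equivalently, $\log|g_2|=\log|\Delta|$ decays like $-q^{k+1}$, not linearly in $k$, so your proposed computation $\log|j|=(q+1)\log|g_1|-\log|g_2|$ with a linear slope for $\log|g_2|$ would also come out wrong.) With the correct formula, the case $k\geq 1$ gives the bound as an exact identity and the case $k=0$ gives $\log|j|\leq q = qD(\Lambda)$, which the maximum absorbs.

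A second, smaller omission: $\log D(\Lambda)$ need not be an integer, since $\omega_1\in\Cinf$ can have non-integral valuation. The paper handles this by citing that $\log|j|$ interpolates linearly in $\log D(\Lambda)$ between consecutive integers (\cite[Rem.~2.14]{Gek0}) and then invoking convexity of $x\mapsto q^{x+1}$ to conclude $\log|j(\phi)|\geq qD(\Lambda)$ for all $D(\Lambda)>1$. Your sketch only treats $|z|=q^k$ with $k$ integral, so even after fixing the growth rate you would need this interpolation-plus-convexity step to cover the general case.
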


\begin{proof}
We use an estimate of $|j(\phi)|$ obtained by Gekeler in \cite{Gek0}. The reduced rank 2 lattice $\Lambda$ has a successive minimum basis $(\omega_1,1)$, where $\omega_1\in\Cinf$ satisfies 
\[
1 \leq |\omega_1|=|\omega_1|_i := \inf_{x\in\Finf}|\omega_1 - x| = D(\Lambda).
\]
Suppose first that $k=\log D(\Lambda)\in\BZ_{\geq 0}$, then \cite[Theorem 2.17]{Gek0} gives
\[
\begin{array}{ll} \log|j(\phi)| = q^{k+1} = qD(\Lambda) & \text{if $k \geq 1$,} \\
\log|j(\phi)|\leq q = qD(\Lambda) & \text{if $k=0$.}\end{array}
\]
Furthermore, $\log |j(\phi)|$ interpolates linearly between integral values of $k$ \cite[Rem. 2.14]{Gek0}, in other words, if $k=\lfloor \log D(\Lambda) \rfloor$ and
$s = \log D(\Lambda) - k > 0$, then 
\[
\log|j(\phi)| = s q^{k+1} + (1-s) q^{k+2}.
\]
Since $x\mapsto q^{x+1}$ is convex, it follows that for $D(\Lambda) > 1$,
\[
\log|j(\phi)| \geq qD(\Lambda).
\]
The lemma follows.
\end{proof}

We now use this to get another estimate of (C) in the case $r=2$ and $j=j_1(\phi)$ and $j'=j_1(\phi')$. Since we are assuming that each $\Lambda_v$ is reduced, $D(\Lambda_v)\geq 1$.
We obtain

\begin{eqnarray*}
& & \frac{1}{[K:F]}\sum_{v\in M_K^\infty} n_v\big[\log D(\Lambda'_v)^{1/2} - \log D(\Lambda_v)^{1/2}\big] \\ 
& \leq & \frac{1}{[K:F]}\sum_{v\in M_K^\infty} n_v\log D(\Lambda'_v)^{1/2} \\
& \leq & \frac{1}{2[K:F]}\sum_{v\in M_K^\infty} n_v\log \max\left\{\frac{1}{q}\log|j'|_v,1\right\} \\
& = & \frac{1}{2[K:F]}\sum_{\sigma : K \into \Cinf} \log \max\left\{\frac{1}{q}\log|j'^\sigma|^{n_v},1\right\} \\
& = &\frac{1}{2} \log \left[\prod_{\sigma : K \into \Cinf} \max\left\{\frac{1}{q}\log|j'^\sigma|^{n_v},1\right\} ^{1/[K:F]}\right] \\
& \leq & \frac{1}{2} \log\left[ \frac{1}{[K:F]} \sum_{\sigma : K \into \Cinf} \max\left\{\frac{1}{q}\log|j'^\sigma|^{n_v},1\right\} \right] \qquad\text{(by the AM-GM inequality)} \\
& \leq & \frac{1}{2}\log\left[1+ \frac{1}{q}\frac{1}{[K:F]} \sum_{\sigma : K \into \Cinf} n_v\max\{\log|j'^\sigma|,0\} \right] \\
& \leq & \frac{1}{2} \log\left[1 + \frac{1}{q}h(j')\right].
\end{eqnarray*}
Plugging this, Lemma \ref{TaguchiIsog} and (\ref{BoundB}) into (\ref{MainIdentity}), we obtain
\[
h_G(\phi')-h_G(\phi) \leq \frac{1}{2}\log\deg f + \left(\frac{q}{q-1} - \frac{q^2}{q^2-1}\right) + \frac{1}{2} \log\left[1 + \frac{1}{q}h(j')\right].
\]
Finally, since $h(j) = (q^2-1)h_G(\phi)$, we obtain Theorem \ref{main}, Part 3, after multiplying by $(q^2-1)$. \qed

\section{Drinfeld modular polynomials} \label{sec:modular}

Let $m\in\BF_q[t]$ be monic. 
We define
\[
\psi(m)=\vert m\vert\prod_{P\vert m}\left(1+\frac{1}{\vert P\vert}\right) \quad \mathrm{and} \quad \kappa(m)=\sum_{P\vert m}\frac{\deg P}{\vert P\vert},
\]
where $P$ ranges over all monic irreducible factors of $m$. 

In analogy to classical modular polynomials, Bae \cite{Bae92} constructed polynomials $\Phi_m(X,Y)\in\BF_q[t][X,Y]$ for each monic $m\in\BF_q[t]$, called {\em Drinfeld modular polynomials}, with the following properties. 
\begin{enumerate}
	\item Degree: $\Phi_m(X,Y)$ is monic of degree $\psi(m)$ in each variable,
	\item Symmetry: $\Phi_m(X,Y)=\Phi_m(Y,X)$,
	\item Irreducibility: $\Phi_m(X,Y)$ is irreducible in $\Cinf[X,Y]$,
	\item Isogeny: $\Phi_m(j,j')=0$ if and only if $j=j(\phi)$ and $j'=j(\phi')$ are the $j$-invariants of rank two Drinfeld modules $\phi$ and $\phi'$ linked by an isogeny of kernel $A/mA$.
\end{enumerate}

To study the coefficients of $\Phi_m(X,Y)$, we introduce yet another height. To a polynomial $f$ in several variables with coefficients in $\mathbb{C}_\infty$, we associate its na\"ive height: 
\[
h(f)= \log\max_c|c|_\infty,
\]
where $c$ ranges over all the coefficients of $f$.

Hsia proved the following asymptotic result in \cite{hsia} page 237:

\begin{thm}[Hsia]\label{Hsia}
For any monic, non-constant polynomial $m\in{\BF_q[t]}$ we have 
$$h(\Phi_m)=\frac{q^2-1}{2}\psi(m) (\deg m -2\kappa(m) +O(1))$$ when $\vert m\vert$ tends to infinity.
\end{thm}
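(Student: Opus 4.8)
The strategy is to reduce Hsia's theorem to the rank two isogeny height bound in Theorem \ref{main}, Part 2, together with its dual, used to pin down the leading term $\frac{q^2-1}{2}\psi(m)\deg m$, while the refinement to $\deg m - 2\kappa(m) + O(1)$ will be extracted from a careful analysis of the ramification and the finite-place contributions in Taguchi's Isogeny Lemma \ref{TaguchiIsog}. Concretely, fix a rank two Drinfeld module $\phi$ defined over a suitable field, with $j$-invariant $j=j(\phi)$, and let $\phi'$ range over the (finitely many) rank two Drinfeld modules admitting an isogeny $f:\phi\to\phi'$ with $\ker f\cong A/mA$; these are exactly the roots $j'$ of $\Phi_m(j,Y)=0$. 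Then $h(\Phi_m(j,Y))$ is governed by the heights $h(j')$ of these roots: since $\Phi_m(j,Y)=\prod_{j'}(Y-j')$ is monic of degree $\psi(m)$, the height of the coefficient polynomial in $Y$ (as an element of $\Cinf[Y]$, after specialising $X\mapsto j$) is, up to an $O(\psi(m))$ error coming from the elementary symmetric functions, equal to $\sum_{j'}\max\{h_\infty(j'),0\}$ summed over the relevant place, or more precisely controlled by $\psi(m)\max_{j'} h(j')$ from above and by a single $h(j')$ from below.

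\textbf{Key steps in order.} First I would choose $\phi$ over $F$ itself with $h(j)$ bounded (e.g. $j\in\BF_q[t]$ of small degree), so that $h(j)=O(1)$ and all the $O(1)$'s in the final statement absorb quantities depending only on $\phi$. Second, apply Theorem \ref{main}, Part 2 to each isogeny $f:\phi\to\phi'$ of degree $\deg f=|m|$: since $h(j)=O(1)$ and $\log\deg f=\deg m$, this gives $h(j')\leq \frac{q^2-1}{2}\deg m + \frac{q^2-1}{2}\log[1+\frac1q h(j')] + O(1)$, and a standard bootstrap (the $\log h(j')$ term is negligible next to $\deg m$) yields $h(j')\leq \frac{q^2-1}{2}\deg m + O(\log\deg m)$. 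Third, run the dual isogeny $\hat f:\phi'\to\phi$, whose degree satisfies $\log\deg\hat f\leq\deg m$ (here $r=2$, so $\deg\hat f\leq\deg f$), to get the reverse inequality $h(j)\geq \frac{q^2-1}{2}h_G(\phi')\cdot(\text{something})-\cdots$; combined with $h(j)=O(1)$ this forces $h(j')\geq \frac{q^2-1}{2}\deg m - O(\log\deg m)$ as well, except this only handles one factor. Fourth — and this is where the detailed bookkeeping enters — I would \emph{not} be content with a single root; instead, sum over all $\psi(m)$ roots $j'$, using that $h(\Phi_m(j,Y))$ sits between $\max_{j'}h(j')$ and $\psi(m)\max_{j'}h(j')+O(\psi(m))$, and simultaneously recover the full polynomial $\Phi_m(X,Y)$ rather than its specialisation by letting $j$ vary over a Zariski-dense set of values, invoking the bound on coefficients of a polynomial interpolated from its values at enough points.

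\textbf{The refinement $\deg m - 2\kappa(m) + O(1)$ and the main obstacle.} The leading-order reduction above only yields $h(\Phi_m)=\frac{q^2-1}{2}\psi(m)(\deg m+O(\log\deg m))$, which is weaker than Hsia's statement: it misses the precise correction $-2\kappa(m)$ and has an unacceptable $O(\psi(m)\log\deg m)$ error rather than $O(\psi(m))$. To sharpen it I would return to the exact formula in Taguchi's Isogeny Lemma, $\hTag(\phi')-\hTag(\phi)=\frac1r\log\deg f-\frac{1}{[K:F]}\log\#(R/D_f)$, and compute the different $D_f$ precisely for the universal isogeny of kernel $A/mA$: the term $\frac{1}{[K:F]}\log\#(R/D_f)$, when averaged over the $\psi(m)$ conjugate isogenies and expressed back in terms of $h(j)$ via $h(j)=(q^2-1)h_G(\phi)$, should contribute exactly the $\kappa(m)=\sum_{P\mid m}\frac{\deg P}{|P|}$ term, reflecting the wild-ramification behaviour of the $m$-torsion at primes dividing $m$. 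The hard part will be this last identification: controlling the different of the isogeny (equivalently, the conductor-discriminant contribution of the $A/mA$-torsion) tightly enough to produce the constant $-2\kappa(m)$ with only an $O(1)$ error, and simultaneously upgrading the infinite-place estimate from Lemma \ref{jgrowth} and the $\log[1+\frac1q h(j')]$ term in Theorem \ref{main}, Part 2 to an error of the correct order — this requires replacing the crude AM--GM step in the proof of Part 2 by a sharper estimate valid uniformly as $|m|\to\infty$. An alternative, and perhaps cleaner, route to the sharp statement is to bypass the bootstrap entirely and work directly with the analytic uniformizer: express $\Phi_m(j(\omega),j(\omega'))$ as a product over the $\psi(m)$ sublattices $\Lambda'\supset\Lambda$ of index $|m|$, use Gekeler's expansion of $j$ near the cusp together with Lemma \ref{jgrowth} to estimate $\log|j(\phi')|$ for each such $\Lambda'$, and sum; the combinatorics of sublattices of a given index then produces $\psi(m)$ and $\kappa(m)$ directly. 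I expect the genuine obstacle, under either approach, to be making the error term $O(1)$ rather than merely $o(\deg m)$ — that is the whole content of passing from a soft isogeny-invariance statement to Hsia's precise asymptotic.
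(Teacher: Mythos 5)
This theorem is not proved in the paper at all: it is quoted from Hsia \cite{hsia}, and the paper's own contribution in Section \ref{sec:modular} is explicitly the weaker, but fully explicit, upper bound of Proposition \ref{Drinfeld modular}, whose asymptotic form (\ref{Drinfeld modular asymp}) has the larger constant $\frac{q^2+4}{2}$ and is described as ``only slightly weaker than Hsia's exact asymptotic.'' So the relevant question is whether your plan actually proves Hsia's statement, and it does not. The most concrete problem is your lower bound. Theorem \ref{main}, Part 2 (and its dual applied to $\hat f$) controls $|h(j')-h(j)|$ from \emph{above}; with $h(j)=O(1)$ this yields $h(j')\leq\frac{q^2-1}{2}\deg m+O(\log\deg m)$ but only the trivial lower bound $h(j')\geq h(j)-\frac{q^2-1}{2}\deg m-\cdots$, which is negative. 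Your claim that running the dual isogeny ``forces $h(j')\geq\frac{q^2-1}{2}\deg m-O(\log\deg m)$'' reverses an inequality: an isogeny estimate that is symmetric about $h(j)$ can never certify that $h(j')$ is \emph{large}. Since $h(\Phi_m)$ is an equality statement, you need a genuine lower bound on the height of at least one root (in fact on the product of the roots), and nothing in the paper's machinery supplies it. Hsia obtains it by working at the cusp with the expansion of $j$ and the combinatorics of index-$|m|$ sublattices --- essentially your ``alternative route,'' which you only sketch.

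The second gap is the correction term $-2\kappa(m)$ and the $O(1)$ error. Your proposal that the different $D_f$ in Taguchi's Lemma \ref{TaguchiIsog}, averaged over the $\psi(m)$ conjugate isogenies, ``should contribute exactly'' $\kappa(m)$ is an unverified hope, not an argument; and even granting it, the interpolation step (Lemma \ref{lem20}) already costs $2\psi(m)\log\psi(m)$, and the AM--GM step in the proof of Theorem \ref{main}, Part 2 costs $O(\psi(m)\log\deg m)$, both of which exceed the $O(\psi(m))$ error Hsia allows. You correctly identify these as the obstacles, but identifying an obstacle is not the same as overcoming it. As written, your plan reproduces (at best) the paper's Proposition \ref{Drinfeld modular}, not Theorem \ref{Hsia}.
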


Our goal in this last section is to give a completely explicit upper bound for $h(\Phi_m)$. We start by preparing an interpolation lemma with the following set of interpolation points.

\begin{lem}\label{S}
Let $n\geq0$ be an integer. Consider the set 
\[
S_n=\Big\{\alpha_n t^n+\cdots + \alpha_0+\cdots + \alpha_{-n}t^{-n}\;\vert \; \forall i \in{\{-n, \ldots, n\}}, \; \alpha_i\in{\mathbb{F}_q}\Big\}.
\]
It has cardinality $q^{2n+1}$. Let $d\leq q^{2n+1}-1$, and consider $d+1$ distinct points $y_0,y_1,\ldots,y_d\in S_n$.

For any $k\in{\{0, \cdots,d\}}$, denote 
$\displaystyle{T_k(Y)=\prod_{\substack{s=0\\s\neq k}}^d (Y-y_s)=\sum_{j=0}^d a_j Y^j}$, 
where the product is taken over all $s$ in $\{0, \cdots,d\}$ different from $k$. Then we have 
\begin{enumerate}
\item $\max\Big\{\vert a_j \vert_\infty \, \Big\vert \; j\in\{0,\ldots, d\}\Big\}\leq q^{nd}$,
\item $\displaystyle{\prod_{\substack{s=0\\s\neq k}}^{d}\vert y_k-y_s\vert_\infty \geq q^{-nd}}.$
\end{enumerate}
\end{lem}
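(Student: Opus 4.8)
The plan is to estimate both quantities using the non-archimedean (ultrametric) nature of $|\cdot|_\infty$ on $\Finf = \BF_q((1/t))$, which makes everything cleaner than the classical case. First I would record the key observation: if $y = \sum_{i=-n}^n \alpha_i t^i$ and $y' = \sum_{i=-n}^n \alpha_i' t^i$ are two distinct elements of $S_n$, then their difference $y - y'$ is again of the form $\sum_{i=-n}^n \beta_i t^i$ with $\beta_i \in \BF_q$, not all zero, so $|y-y'|_\infty = q^{\deg(y-y')} \leq q^n$. Dually, since $y - y' \neq 0$ and the lowest exponent that can appear is $-n$, we also have $|y-y'|_\infty \geq q^{-n}$. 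So every pairwise difference of points in $S_n$ has absolute value in the range $[q^{-n}, q^n]$.

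Part (1) then follows immediately: $T_k(Y) = \prod_{s \neq k}(Y - y_s)$ is a product of $d$ linear factors $Y - y_s$, each with $|y_s|_\infty \leq q^n$ (taking $y' = 0 \in S_n$, or just directly from the shape of $y_s$), so by the ultrametric inequality every coefficient $a_j$ of the expansion satisfies $|a_j|_\infty \leq \max_s |y_s|_\infty^{?}$ — more carefully, each $a_j$ is (up to sign) an elementary symmetric function of the $d$ numbers $\{y_s : s \neq k\}$, hence a sum of products of at most $d$ of them, so $|a_j|_\infty \leq (\max_s |y_s|_\infty)^{d} \leq q^{nd}$. For part (2), $\prod_{s \neq k} |y_k - y_s|_\infty$ is a product of $d$ terms, each of which is $\geq q^{-n}$ by the observation above, hence the whole product is $\geq (q^{-n})^d = q^{-nd}$.

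The cardinality claim $\#S_n = q^{2n+1}$ is clear since there are $2n+1$ coefficients $\alpha_i$ each ranging freely over $\BF_q$, and the condition $d \leq q^{2n+1}-1$ just guarantees that $d+1$ distinct points can actually be chosen from $S_n$.

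I do not expect any serious obstacle here — the statement is essentially a packaging of the ultrametric bound $q^{-n} \leq |y - y'|_\infty \leq q^n$ for $y \neq y'$ in $S_n$, combined with the trivial fact that the elementary symmetric polynomials in $d$ variables each have "ultrametric degree" at most $d$. The only point requiring a moment's care is making sure that $0 \in S_n$ (or equivalently that each $|y_s|_\infty \leq q^n$ directly), so that the bound $|y_s|_\infty \leq q^n$ used in part (1) is legitimate; this is immediate from the defining shape of elements of $S_n$. One should also be mildly careful that in part (1) the bound is on the coefficients $a_j$ for all $j \in \{0,\dots,d\}$ including the leading coefficient $a_d = 1$, which trivially satisfies $|a_d|_\infty = 1 \leq q^{nd}$, and the constant term $a_0 = \pm\prod_{s\neq k} y_s$, which has absolute value $\prod_{s \neq k}|y_s|_\infty \leq q^{nd}$ — consistent with the general elementary-symmetric-function estimate.
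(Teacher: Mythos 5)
Your argument is correct and is essentially the same as the paper's own proof: both rest on the observation that every element of $S_n$ has $t$-degree at most $n$ (giving $|y_s|_\infty\leq q^n$ and hence the elementary-symmetric-function bound $|a_j|_\infty\leq q^{nd}$), and that every non-zero difference of elements of $S_n$ has $t$-degree at least $-n$ (giving $|y_k-y_s|_\infty\geq q^{-n}$ and the product bound $q^{-nd}$). Nothing further is needed.
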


\begin{proof}
The maximum degree in $t$ of elements in $S_n$ is $n$, the upper bound then comes from the explicit computation of the coefficients of $T_k$ in terms of elements of $S_n$, and the degree of $T_k$ is $d$. The minimum degree in $t$ of a non-zero difference of elements in $S_n$ is $-n$, the lower bound is direct as well. Note that in \cite[Lemma 5.1]{hsia}, the interpolation set of points is chosen with the extra property $\vert y_k-y_s\vert_\infty = \max\{\vert y_k\vert_\infty, \vert y_s\vert_\infty\}$, which is not assumed here.
\end{proof}

\begin{lem}\label{lem20}
Let $P\in{\mathbb{C}_\infty[X,Y]}$ be a nonzero polynomial of degree at most $d\geq1$ in each variable. Suppose there exists a real number $B>0$ such that 
 $h(P(X,y_k))\leq B$ for each $y_k$ in the set $S_n$ defined in Lemma \ref{S}. Then we have 
\begin{equation}
h(P)\leq B+2nd.
\end{equation}
\end{lem}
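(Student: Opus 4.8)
The plan is to prove Lemma \ref{lem20} by a two-stage interpolation argument, first recovering the $Y$-coefficients of $P$ (which are polynomials in $X$) from the values $P(X,y_k)$, and then comparing heights. Write $P(X,Y)=\sum_{j=0}^d c_j(X)Y^j$ with $c_j\in\Cinf[X]$. Since the $d+1$ points $y_0,\ldots,y_d\in S_n$ are distinct, Lagrange interpolation in $Y$ gives
\[
c_j(X) = \sum_{k=0}^d P(X,y_k)\,\frac{a^{(k)}_j}{\prod_{s\neq k}(y_k-y_s)},
\]
where $T_k(Y)=\prod_{s\neq k}(Y-y_s)=\sum_j a^{(k)}_j Y^j$ as in Lemma \ref{S}. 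Here I use that $d\leq q^{2n+1}-1=\#S_n-1$, so that there are indeed enough interpolation points available in $S_n$.

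The key estimate is then purely a matter of the non-archimedean (ultrametric) triangle inequality for $|\cdot|_\infty$. For any coefficient $c$ of $c_j(X)$ as a polynomial in $X$, it is an $\Finf$-linear combination (in fact with coefficients the $a^{(k)}_j/\prod_{s\neq k}(y_k-y_s)$, which lie in $\Finf$) of the corresponding $X$-coefficients of the $P(X,y_k)$; since the $X$-coefficients of $P(X,y_k)$ all have absolute value $\leq q^{B}$ by hypothesis $h(P(X,y_k))\leq B$, the ultrametric inequality gives
\[
|c|_\infty \;\leq\; \max_{0\leq k\leq d}\;\frac{|a^{(k)}_j|_\infty}{\prod_{s\neq k}|y_k-y_s|_\infty}\cdot q^{B}.
\]
Now I invoke Lemma \ref{S}: part (1) bounds the numerator by $q^{nd}$ and part (2) bounds the denominator below by $q^{-nd}$, so the ratio is at most $q^{2nd}$. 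Hence every coefficient of $P$ (which is a coefficient of some $c_j(X)$) satisfies $|c|_\infty\leq q^{B+2nd}$, i.e. $h(P)\leq B+2nd$, as claimed.

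I do not expect any serious obstacle here; the only points requiring care are bookkeeping ones. First, one should confirm that the hypothesis "$\deg_Y P\le d$" is exactly what makes Lagrange interpolation at $d+1$ nodes recover $P$ uniquely, and that "$\deg_X P\le d$" plays no role beyond ensuring $P$ has finitely many $X$-coefficients (it is not actually needed for the height bound, though it is harmless to keep it in the statement). Second, one must be slightly careful that the interpolation coefficients $a^{(k)}_j/\prod_{s\neq k}(y_k-y_s)$ genuinely lie in $\Finf$ (they do: the $y_s\in S_n\subset\Finf$), so that multiplying a $\Cinf$-coefficient of $P(X,y_k)$ by them does not change absolute values in an uncontrolled way — the strong triangle inequality $|x+y|_\infty\le\max\{|x|_\infty,|y|_\infty\}$ then does all the work with no factor of $d+1$ appearing. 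If one prefers not to rely on the ultrametric inequality, the same bound follows with $|c|_\infty\le (d+1)\max(\cdots)$ and then noting $d+1\le q^{2n+1}\le q^{2nd}$-ish adjustments, but the clean ultrametric route avoids this entirely. The remark in Lemma \ref{S} that Hsia's sharper normalization $|y_k-y_s|_\infty=\max\{|y_k|_\infty,|y_s|_\infty\}$ is not assumed here is exactly why we must use the explicit bounds of Lemma \ref{S}(1)–(2) rather than a more refined cancellation.
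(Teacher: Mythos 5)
Your proof is correct and is essentially the paper's own argument: both perform Lagrange interpolation in the $Y$-variable at the $d+1$ chosen nodes of $S_n$ and then combine the ultrametric inequality with the two bounds of Lemma \ref{S} to obtain the factor $q^{2nd}$. The only cosmetic difference is that you recover the coefficient polynomials $c_j(X)$ of $Y^j$ globally, whereas the paper interpolates the coefficient polynomials $Q_r(Y)$ of $X^r$ one at a time; the resulting coefficient-wise estimate is identical.
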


\begin{proof}
We may write $P(X,Y)=\displaystyle{\sum_{0\leq r\leq d}Q_r(Y)X^r}$ for some polynomials $Q_r(Y)\in\Cinf[Y]$. For any degree $0\leq r\leq d$ and any of the above points $y_k$, let $c_{k,r}=Q_r(y_k)$ be the coefficient of $X^r$ of the polynomial $P(X,y_k)$. By Lagrange interpolation, one has 
\begin{equation}
Q_r(Y)=\sum_{k=0}^{d}c_{k,r}\prod_{\substack{s=0\\s\neq k}}^{d}\frac{Y-y_s}{y_k-y_s}.
\end{equation}
We write $T_k(Y)=\displaystyle{\prod_{\substack{s=0\\s\neq k}}^{d}(Y-y_s)}$, by Lemma \ref{S} we have $h(T_k)\leq q^{nd}$ and $\displaystyle{\prod_{\substack{s=0\\s\neq k}}^{d}\vert y_k-y_s\vert_\infty \geq q^{-nd}}$ for any $k\in\{0,\cdots,d\}$, and by assumption $|c_{k,r}|_\infty\leq B$. The result follows.
\end{proof}

We add a small technical lemma.

\begin{lem}\label{small}
Let $a$ be a positive real number. Let $q\geq2$ be a prime power. Assume $x\geq q^{3}$. Then $\ln(1+\frac{x}{q})\leq\frac{x}{q^2}$ and the inequality 
\begin{equation}
x\leq a+\frac{q^2-1}{2}\log\Big(1+\frac{x}{q}\Big)
\end{equation}
implies
\begin{equation}
x\leq a+\frac{q^2-1}{2}\log\left(1+\frac{a}{q}\Big(1-\frac{q^2-1}{2q^2\ln q}\Big)^{-1}\right),
\end{equation}
where $\ln$ is the natural logarithm and $\log$ is the logarithm in base $q$.
\end{lem}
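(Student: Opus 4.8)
The plan is to treat the two assertions of Lemma~\ref{small} in turn, both by elementary one-variable calculus over the reals. For the auxiliary inequality $\ln(1+\tfrac{x}{q})\le \tfrac{x}{q^2}$, I would set $\varphi(x)=\tfrac{x}{q^2}-\ln(1+\tfrac{x}{q})$ and check that $\varphi(q^3)\ge 0$ together with $\varphi'(x)\ge 0$ for $x\ge q^3$; the derivative is $\tfrac{1}{q^2}-\tfrac{1}{q+x}$, which is nonnegative as soon as $x\ge q^2-q$, and this is implied by $x\ge q^3$ for every prime power $q\ge 2$. Checking $\varphi(q^3)\ge 0$ amounts to $\ln(1+q^2)\le q$, which holds comfortably for $q\ge 2$. (Strictly, one only needs the inequality at the base point together with monotonicity, so this disposes of the first claim.)

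For the implication, I would first convert the hypothesis to natural logarithms: $x\le a+\tfrac{q^2-1}{2\ln q}\ln\!\big(1+\tfrac{x}{q}\big)$. Applying the bound just proved, $\ln(1+\tfrac{x}{q})\le \tfrac{x}{q^2}$, gives $x\le a+\tfrac{q^2-1}{2q^2\ln q}\,x$, hence $x\big(1-\tfrac{q^2-1}{2q^2\ln q}\big)\le a$. One must note here that the coefficient $1-\tfrac{q^2-1}{2q^2\ln q}$ is strictly positive: since $\tfrac{q^2-1}{q^2}<1$ and $2\ln q>1$ for $q\ge 2$ (indeed $\ln 2>0.69$), the subtracted term is less than $1$, so dividing is legitimate and yields
\[
x\le a\Big(1-\frac{q^2-1}{2q^2\ln q}\Big)^{-1}.
\]
Substituting this upper bound for $x$ back into the right-hand side of the original hypothesis (which is legitimate because $t\mapsto \log(1+\tfrac{t}{q})$ is increasing in $t$) gives exactly the claimed conclusion
\[
x\le a+\frac{q^2-1}{2}\log\!\left(1+\frac{a}{q}\Big(1-\frac{q^2-1}{2q^2\ln q}\Big)^{-1}\right).
\]

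The only genuine subtlety, and the step I would be most careful about, is the positivity of $1-\tfrac{q^2-1}{2q^2\ln q}$, since the whole argument collapses if this factor were nonpositive; but as noted it follows from the crude estimates $\tfrac{q^2-1}{q^2}<1$ and $\ln q\ge \ln 2>\tfrac12$, uniformly in the prime power $q\ge 2$. Everything else is bookkeeping: keeping track of the base-$q$ versus natural logarithm conversion factor $\ln q$, and using monotonicity of $\log(1+\cdot/q)$ to reinsert the bound on $x$. I would write the proof in that order: (i) the lemma's first inequality via monotonicity of $\varphi$; (ii) linearize the hypothesis; (iii) solve for $x$, checking the positivity of the coefficient; (iv) reinsert into the monotone right-hand side.
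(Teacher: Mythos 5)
Your proof is correct, and since the paper's own proof is just the phrase ``Direct computation,'' your argument simply supplies the elementary calculus the authors left implicit: the monotonicity check for $\varphi(x)=\tfrac{x}{q^2}-\ln(1+\tfrac{x}{q})$ on $[q^3,\infty)$, the linearization of the hypothesis, the positivity of $1-\tfrac{q^2-1}{2q^2\ln q}$ (the one point that genuinely needs checking, which you handle via $2\ln q>1$), and the reinsertion by monotonicity. Nothing further is needed.
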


\begin{proof}
Direct computation.
\end{proof}

We are now ready to prove the following.

\begin{prop}\label{Drinfeld modular}
For any monic, non-constant polynomial $m\in{\mathbb{F}_q[t]}$, the height $h(\Phi_m)$ is bounded above by
$$\psi(m)\max\left(q^{3},\;\Big[ \frac{q^2-1}{2}\deg m + q+\frac{1}{2}(\log\psi(m)+1)+ \frac{q^2-1}{2}\log\left(1+\frac{a}{q}\Big(1-\frac{q^2-1}{2q^2\ln q}\Big)^{-1}\right)  \Big]\right)$$
$$+2\psi(m)\log\psi(m),$$
where $a=\frac{q^2-1}{2}\deg m + q+\frac{1}{2}(\log\psi(m)+1)$.
\end{prop}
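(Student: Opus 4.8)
The strategy is to combine the isogeny estimate of Theorem \ref{main}, Part 2, with the interpolation Lemma \ref{lem20}, in the spirit of Hsia's argument but keeping every constant explicit. The key observation is that, by property (4) of the Drinfeld modular polynomials, for a fixed value $j' = y$ the roots in $X$ of $\Phi_m(X,y)$ are exactly the $j$-invariants $j(\phi)$ of rank two Drinfeld modules $\phi$ admitting an isogeny of kernel $A/mA$ onto a Drinfeld module $\phi'$ with $j(\phi') = y$. Since $\deg f = \#\ker f = |m| = q^{\deg m}$, Theorem \ref{main}, Part 2, bounds each such $h(j)$ in terms of $h(j')$. Thus, writing $\Phi_m(X,Y) = \prod_{i=1}^{\psi(m)}(X - r_i(Y))$ where the $r_i$ are the roots (over an algebraic closure of $\BF_q(t,Y)$), we get for each specialisation $y \in S_n$ a bound $h(r_i(y)) \leq \frac{q^2-1}{2}\deg m + \frac{q^2-1}{2}\log[1 + \frac{1}{q}h(y)] + q$ for all $i$, hence a bound on $h(\Phi_m(X,y))$ via the elementary estimate on heights of products of linear factors (the height of $\prod(X-r_i)$ is at most $\sum h(r_i)$ plus a $\log$ of the number of factors, coming from the number of monomials).

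First I would fix $n$ minimal so that $|S_n| = q^{2n+1} > \psi(m)$, i.e. essentially $2n+1 = \lceil \log_q \psi(m)\rceil + 1$, so that $n \approx \frac{1}{2}\log\psi(m)$; this controls the $2nd$ error term in Lemma \ref{lem20}, which becomes $\approx 2\psi(m)\log\psi(m)$, the last summand in the statement. Then I would pick $d = \psi(m)$, the degree of $\Phi_m$ in each variable. Next, for $y \in S_n$ we have $h(y) = \log\max_i|\alpha_i t^i \cdots|_\infty \leq n \approx \frac{1}{2}\log\psi(m)$ — actually one should be slightly careful: $h(y)$ here is the naïve (infinite-place) height, which for an element of $S_n$ is at most $n$. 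Plugging $h(j') \leq n \leq \frac{1}{2}(\log\psi(m)+1)$ into the Theorem \ref{main} bound, each root satisfies $h(r_i(y)) \leq \frac{q^2-1}{2}\deg m + q + \frac{q^2-1}{2}\log[1 + \frac{1}{q}\cdot\frac{1}{2}(\log\psi(m)+1)]$; a cruder but cleaner route is to note that the product $\Phi_m(X,y)$ has height at most $\psi(m)\cdot(\text{bound on each } h(r_i)) + \log(\text{number of coefficients})$, and the number of coefficients of a degree-$\psi(m)$ polynomial in $X$ is $\psi(m)+1$, contributing $\log(\psi(m)+1) \leq \log\psi(m)+1$. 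Summing and dividing by two in the right places yields the quantity $a = \frac{q^2-1}{2}\deg m + q + \frac{1}{2}(\log\psi(m)+1)$ appearing in the statement.

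The slightly delicate point — and the one I expect to be the main obstacle — is the self-referential nature of the bound in Theorem \ref{main}, Part 2: it bounds $h(j') - h(j)$ in terms of $h(j')$, not $h(j)$. Here we are specialising $j' = y$ to a known small value, so $h(j')$ is under control and the issue does not bite in that direction; but to bound $h(\Phi_m(X,y))$ we need to bound the $X$-roots $r_i(y) = j(\phi)$, and the inequality of Theorem \ref{main} as stated gives $h(j') - h(j) \leq \cdots$, i.e. it controls $h(j') - h(j)$ from above, which is the wrong sign for bounding $h(j)$. The resolution is to apply the symmetric statement obtained by running the isogeny the other way (using the dual isogeny $\hat f$, whose degree is at most $(\deg f)^{r-1} = |m|$ in rank $2$, so $\log\deg\hat f \leq \deg m$ as well), which gives $h(j) - h(j') \leq \frac{q^2-1}{2}\deg m + \frac{q^2-1}{2}\log[1 + \frac{1}{q}h(j)] + q$, an implicit inequality for $h(j)$ of exactly the form treated in Lemma \ref{small}. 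Applying Lemma \ref{small} with $x = h(j)$ and $a$ as above — after first checking the trivial case $h(j) < q^3$, which accounts for the $\max(q^3, \cdots)$ in the statement — converts this implicit bound into the explicit closed form $h(j) \leq a + \frac{q^2-1}{2}\log(1 + \frac{a}{q}(1 - \frac{q^2-1}{2q^2\ln q})^{-1})$. Multiplying the per-root bound by $\psi(m)$, adding the $\log(\psi(m)+1)$ from the number of monomials and then the $2nd \approx 2\psi(m)\log\psi(m)$ error from Lemma \ref{lem20} with $B$ equal to $\psi(m)$ times this explicit per-root bound, and collecting terms, produces precisely the claimed expression. The only real work beyond bookkeeping is making sure the two applications of Theorem \ref{main} (forward and via the dual isogeny) both apply — which they do, since $\deg\hat f \leq (\deg f)^{r-1}$ and $r = 2$ forces $\deg\hat f \leq \deg f = |m|$ — and that Lemma \ref{small}'s hypothesis $x \geq q^3$ is handled by the outer maximum.
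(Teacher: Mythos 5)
Your proposal follows the paper's proof essentially verbatim: specialise $Y$ at points of $S_n$ with $n\approx\frac{1}{2}\log\psi(m)$, bound each root of $\Phi_m(X,j_0)$ by applying Theorem \ref{main}(2) with the root in the role of $j'$ (which is exactly how the paper handles the ``direction'' issue you flag, so your dual-isogeny detour lands on the same implicit inequality), resolve that inequality with Lemma \ref{small}, and interpolate with Lemma \ref{lem20}. The only cosmetic difference is your extra $\log(\psi(m)+1)$ from counting monomials, which the ultrametric estimate on elementary symmetric functions renders unnecessary and which the paper accordingly omits.
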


\begin{proof}
Let us fix $j_0\in{S_n}$, where $n$ is chosen such that $q^{2n+1}\geq\psi(m)+1\geq q^{2n-1}$, and $S_n$ is the set of Lemma \ref{S}. The relation between roots and coefficients for the polynomial $\Phi_m(X,j_0)$ gives in particular the inequality $$h(\Phi_m(X,j_0))\leq \psi(m) \max_j h(j),$$ where the maximum is taken over all the roots $j$ of $\Phi_m(X,j_0)$. Each of these roots corresponds to a Drinfeld module isogenous to the fixed one corresponding to $j_0$, hence by Theorem \ref{main}, we get 
\begin{equation}
h(j)-h(j_0) \leq \frac{q^2-1}{2}\deg m + \frac{q^2-1}{2}\log\left(1+\frac{1}{q}h(j)\right) + q.
\end{equation}
Now for any $j_0\in{S_n}$, we have $h(j_0)\leq n\leq \frac{1}{2}(\log \psi(m)+1)$. This leads to:

\begin{equation}
h(j)\leq \frac{q^2-1}{2}\deg m + q+\frac{1}{2}(\log\psi(m)+1) + \frac{q^2-1}{2}\log\left(1+\frac{1}{q}h(j)\right).
\end{equation}
Assume $h(j)\geq q^{3}$, then by Lemma \ref{small} we get
\begin{equation}
h(j)\leq \frac{q^2-1}{2}\deg m +q+\frac{1}{2}(\log\psi(m)+1)+ \frac{q^2-1}{2}\log\left(1+\frac{a}{q}\Big(1-\frac{q^2-1}{2q^2\ln q}\Big)^{-1}\right),
\end{equation}
where $a=\frac{q^2-1}{2}\deg m +q+\frac{1}{2}(\log\psi(m)+1)$ and hence $h(\Phi_m(X,j_0))$ is bounded above by a quantity $B$ equal to
\begin{equation}
\psi(m)\max\left(q^{3},\;\Big[ \frac{q^2-1}{2}\deg m + q+\frac{1}{2}(\log\psi(m)+1)+ \frac{q^2-1}{2}\log\left(1+\frac{a}{q}\Big(1-\frac{q^2-1}{2q^2\ln q}\Big)^{-1}\right)  \Big]\right),
\end{equation}
and by Lemma \ref{lem20} we obtain the result.

\end{proof}

Asymptotically, this gives
\begin{equation}\label{Drinfeld modular asymp}
h(\Phi_m) < \left(\frac{q^2+4}{2}+\epsilon\right)\psi(m)\deg(m)
\end{equation}
for $\deg(m)$ sufficiently large compared to $\epsilon>0$. This is only slightly weaker than Hsia's exact asymptotic in Theorem \ref{Hsia}. 

Another completely explicit upper bound on $h(\Phi_m)$, of order $(q/2)|m|\psi(m)^2$, was obtained by Bae and Lee in \cite[Theorem 3.7]{BL97}.
\\

\end{document}